\documentclass[11pt, letter paper, oneside]{amsart}
\pdfoutput=1
\usepackage[T1]{fontenc}
\usepackage{lmodern}
\usepackage{yhmath}
\usepackage{amscd, amssymb, amsmath, amsthm, mathtools}
\usepackage{enumerate, latexsym, mathrsfs}
\usepackage{xy}
\usepackage{bm}
\usepackage{tikz-cd}
\usepackage{enumitem}

\usepackage{graphics,graphicx,psfrag, mathrsfs}
\usepackage{caption}
\graphicspath{{./img/}}

\usepackage{geometry}

\usepackage[
		bookmarks=true, bookmarksopen=true,%
    bookmarksdepth=3,bookmarksopenlevel=2,%
    colorlinks=true,%
    linkcolor=blue,%
    citecolor=blue]{hyperref}

\newtheorem{theorem}{Theorem}[section]
\newtheorem{lemma}[theorem]{Lemma}
\newtheorem{proposition}[theorem]{Proposition}

\newtheorem{corollary}[theorem]{Corollary}

\newtheorem*{fact}{Fact}

\theoremstyle{definition}
\newtheorem{definition}[theorem]{Definition}

\newtheorem{example}[theorem]{Example}

\newtheorem{remark}[theorem]{Remark}

\numberwithin{equation}{section}

\newcommand{\Real}{{\mathbb{R}}}

\newcommand{\Integer}{{\mathbb{Z}}}

\newcommand{\Hyperb}{{\mathbb{H}}}
\newcommand{\Sph}{{\mathbb{S}}}

\def\HC{{(\hyperref[hyperbolicity condition]{HC})}}
\def\RC{{(\hyperref[rigidity condition]{RC})}}

\DeclareMathOperator{\MCG}{\mathrm{MCG}}
\DeclareMathOperator{\Int}{\mathrm{int}}

\newtheorem*{thm:peripheralDehnfill}{Theorem~\ref{thm:peripheralDehnfill}}
\newtheorem*{thm:closedset}{Theorem~\ref{thm:closedset}}
\newtheorem*{thm:example}{Theorem~\ref{cor:example}}
\newtheorem*{cor:reducedcase}{Corollary~\ref{cor:reducedcase}}
\newtheorem*{thm:secondmain}{Theorem~\ref{thm:secondmain}}
\newtheorem*{thm:hyperbolicity}{Theorem~\ref{thm:hyperbolicity}}

\title[]{Profinite rigidity and geometric convergence}

\author[Yu Huang]{Yu Huang}
\address{Beijing International Center for Mathematical Research, Peking University\\
				Beijing 100871, China P.R.}
\email{hytopaz@stu.pku.edu.cn}

% \thanks{???}
% \subjclass[2024]{???}

 %For temporary comments

\begin{document}
\begin{abstract}
  In this paper, we prove that profinitely rigid finite-volume hyperbolic manifolds form a closed set under geometric topology.
  This observation implies the profinite rigidity of a large family of cusped hyperbolic manifolds via bubble-drilling construction.
  The core of the proof is a strong criterion that is used to verify when bubble-drilled manifolds are hyperbolic.
  This family includes many link complements, such as the Whitehead link complement and the Borromean ring complement.
\end{abstract}

\maketitle

% \setcounter{tocdepth}{1}
% \tableofcontents %tabel of contents

\section{Introduction}\label{sec:intro}

A pair of finitely generated groups, \(G_{1}\) and \(G_{2}\), are said to be \textit{profinitely isomorphic} if they have isomorphic profinite completions, written as $\widehat{G_1}\cong \widehat{G_2}$. It is known that two finitely generated groups have isomorphic profinite completions if and only if they have the same collection of finite quotient groups.
In~\cite{Gr70a}, Grothendieck questioned whether finitely generated residually finite groups are determined by its profinite completions.

It is natural to formulate this question in the setting of 3-manifolds since 3-manifolds are largely determined by their fundamental groups.
Let $\mathfrak{M}$ denote the class of all compact orientable 3-manifolds without boundary spheres. A manifold $M\in \mathfrak{M}$ is called {\em profinitely rigid in $\mathfrak{M}$} if any manifold in \(\mathfrak{M}\) whose fundamental group is profinitely isomorphic to \(\pi_{1}M\) is homeomorphic to \(M\). In this paper,
all manifolds are assumed to be orientable and connected unless explicitly stated otherwise.

Profinite completion of the fundamental group reflects the geometric decomposition of 3-manifolds.
More specifically, profinite completion of the fundamental group determines whether a compact, orientable 3-manifold with possibly non-empty toral boundaries is geometric in the sense of Thurston, and, if so, identifies its geometry type, as established by a series of works by Wilton-Zalesskii~\cite{WZ17a,WZ17b,WZ19a}.
In fact, profinite classification in seven of the eight geometries proposed by Thurston has been well understood, except for the most mysterious hyperbolic case.
For instance, closed 3-manifolds admitting $\mathbb{S}^2\times \mathbb{E}^1$, $\mathbb{E}^3$, $Nil$ and $\widetilde{\mathrm{SL}(2,\mathbb{R})}$ geometry have been proven to be profinitely rigid in $\mathfrak M$ by Wilkes~\cite{Wi17a}, while 3-manifolds admitting $Sol$ or $\mathbb{H}^2\times \mathbb{E}^1$ geometries may not be profinitely rigid according to \cite{Fu13a} and \cite{He14a}, respectively.
We refer the reader to Reid's survey~\cite{Re18a} for more history about profinite rigidity in 3-manifolds.

In the hyperbolic case, Liu~\cite{Li23a} first showed the profinite almost rigidity of finite-volume hyperbolic 3-manifolds,
namely, their homeomorphism types are determined, up to finitely many possibilities, by the profinite completion of fundamental groups.

In this paper, we construct numerous cusped hyperbolic 3-manifolds that are profinitely rigid in \(\mathfrak{M}\) through the bubble-drilling construction.
These examples include many link complements in \(S^3\).
We first formulate some elegant and insightful examples among them as follows and refer the reader to Section~\ref{sec:example} for a detailed procedure.

\begin{thm:example}
The complement spaces of the Whitehead link, the Borromean ring and a specific 5-chain link in $S^3$ are profinitely rigid in $\mathfrak{M}$ (see Figure~\ref{fig:comb}).
\end{thm:example}
\begin{figure}[h]
  \centering
  \begin{minipage}[bt]{0.3\textwidth}
    \centering
    \includegraphics[width=0.7\textwidth]{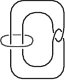}
  \end{minipage} %
  \begin{minipage}[bt]{0.3\textwidth}
    \centering
    \includegraphics[width=0.9\textwidth]{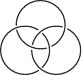}
  \end{minipage}
  \begin{minipage}[bt]{0.3\textwidth}
    \centering
    \includegraphics[width=0.8\textwidth]{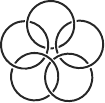}
  \end{minipage} %
  \caption{Three links noted in Theorem~\ref{cor:example}. They are the Whitehead link, the Borreanmean ring, the specific 5-chain link, from left to right respectively.}
  \label{fig:comb}
\end{figure}

The examples in Theorem~\ref{cor:example} follow from a general bubble-drilling construction.
The construction begins from a compact, orientable, fibered 3-manifold $M$ with a fixed fiber structure $\Sigma \to M\to S^1$. An essential simple closed curve $\beta$ in $M$ is called a \textit{bubble} if it lies on a fiber surface.
Given bubbles $\beta_1,\cdots, \beta_n$ on distinct fiber surfaces of $M$, the {\em bubble-drilled manifold} $E= M\setminus \mathring N(\beta_1\cup \cdots \cup \beta_n)$ is obtained by removing the regular neighborhoods of these bubbles.
Under suitable conditions, bubble-drilled manifolds are profinitely rigid in \(\mathfrak{M}\), as established by the following theorem.

\begin{thm:secondmain}

  Let $M$ be a compact, orientable 3-manifold that fibers over $S^1$ with fiber $\Sigma$ being a compact surface, and let $E=M\setminus \mathring N(\beta_1\cup \cdots \cup \beta_n) $ be a bubble-drilled manifold.
If
\begin{itemize}%[(1)]
\item[(HC)]\label{hyperbolicity condition} the interior of \(E\) admits a complete finite-volume hyperbolic structure, and
\item[(RC)]\label{rigidity condition} all fibered hyperbolic 3-manifolds with fiber  \(\Sigma\) are profinitely rigid in $\mathfrak{M}$,
\end{itemize}
then $E$ is profinitely rigid in $\mathfrak{M}$.
\end{thm:secondmain}

On the one hand, the rigidity condition {\RC} is known to hold when \(\Sigma\) is a once-puncture torus by \cite{Br17a} or a four-punctured sphere by \cite{CW23a}. Recently, Wilton-Sisto~\cite{WS24a} provided evidence that the rigidity condition {\RC} may hold for more homeomorphism types of surfaces.

On the other hand, we provide an easily verifiable criterion for the hyperbolicity condition {\HC}.

\begin{thm:hyperbolicity}
The bubble-drilled manifold $E$ is hyperbolic if $M$ is hyperbolic and the bubbles $\beta_i$ are flow-acoannular.
\end{thm:hyperbolicity}

Flow-acoannularity of bubbles is defined as follows.
Let \(\{\widetilde{\beta}_k\}_{k\in\Integer}\) denote all lifts of the \(n\) bubbles \(\beta_i\) in the canonical infinite cyclic covering \(\overline{M}\) of \(M\), corresponding to \(M\to S^1\).
The \(n\) bubbles are called \textit{flow-acoannular} if any two lifts in \(\{\widetilde{\beta}_k\}_{k\in\Integer}\) do not cobound a fiber-transversal annulus that is disjoint from other lifts.
A more computable definition is given in Definition~\ref{def:flow-acoannular 1}.

The proof of Theorem~\ref{thm:secondmain} is based on an observation in~\cite{Xu24b}.
To be precise, \cite{Xu24b} observed that any profinite isomorphism between cusped finite-volume hyperbolic 3-manifolds carries profinite isomorphisms between their Dehn fillings, providing a useful technique for establishing profinite rigidity among hyperbolic 3-manifolds.

\begin{theorem}[{\cite[Theorem A]{Xu24b}}]\label{thm:peripheralDehnfill}
If \(M\) and \(N\) are two profinitely isomorphic finite-volume hyperbolic 3-manifolds with \(k>0\) cusps,
then there exists a homeomorphism $h: \partial M\to \partial N$ such that \(\widehat{\pi_{1}M_{\gamma}}\cong \widehat{\pi_{1}N_{h(\gamma)}}\) for any Dehn filling parameters \(\gamma=(\gamma_{1},\ldots,\gamma_{k})\), where \(\gamma_{i}\) is an empty set or an isotopy class of an essential, oriented simple closed curve on the $i$-th cusp $\partial_iM\cong T^2$,
and \(M_{\gamma}\) is the Dehn filling result along \(\gamma\).
\end{theorem}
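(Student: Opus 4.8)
\emph{The plan} is to translate everything into the peripheral tori. Fix a topological isomorphism $f\colon\widehat{\pi_1M}\to\widehat{\pi_1N}$. Since $\pi_1M_\gamma=\pi_1M/\langle\langle\gamma_1,\dots,\gamma_k\rangle\rangle$ and profinite completion turns a quotient into the quotient by the closure of the image, $\widehat{\pi_1M_\gamma}$ is $\widehat{\pi_1M}$ modulo the closed normal subgroup $R_\gamma$ topologically generated by $\gamma_1,\dots,\gamma_k$, and likewise $\widehat{\pi_1N_{h(\gamma)}}=\widehat{\pi_1N}/R'_{h(\gamma)}$. Hence it suffices to construct a single homeomorphism $h\colon\partial M\to\partial N$ with $f(R_\gamma)=R'_{h(\gamma)}$ for every $\gamma$, for then $f$ descends to the desired isomorphism of Dehn fillings.

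Next I would invoke the detection of the peripheral structure for finite-volume hyperbolic $3$-manifolds (Wilton--Zalesskii \cite{WZ17a,WZ17b,WZ19a}, using that such groups are good, LERF and conjugacy separable): after replacing $f$ by a conjugate there is a bijection $\sigma$ of cusps so that $f$ carries the conjugacy class of the peripheral subgroup $\overline{P_i}\le\widehat{\pi_1M}$ onto that of $\overline{Q_{\sigma(i)}}\le\widehat{\pi_1N}$, inducing (after identifying each $\overline{P_i},\overline{Q_j}$ with $\widehat{\Integer}^2$) matrices $A_i\in\mathrm{GL}_2(\widehat{\Integer})$ with $f(\gamma_i)$ conjugate to $A_i[\gamma_i]$. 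The same results give that distinct conjugates of the $\overline{Q_j}$ intersect trivially and each is self-normalising; so for a slope system $\gamma$ one has $f(R_\gamma)\cap\overline{Q_{\sigma(i)}}=\overline{\langle A_i[\gamma_i]\rangle}$ and $R'_{h(\gamma)}\cap\overline{Q_{\sigma(i)}}=\overline{\langle h_*(\gamma_i)\rangle}$, whence $f(R_\gamma)=R'_{h(\gamma)}$ reduces to the cusp-wise and slope-wise condition $\overline{\langle A_i[\gamma_i]\rangle}=\overline{\langle h_*(\gamma_i)\rangle}$ inside $\overline{Q_{\sigma(i)}}\cong\widehat{\Integer}^2$.

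The heart of the matter is then to show that each $A_i$ is integral up to a scalar, i.e. $A_i=\lambda_iB_i$ with $\lambda_i\in\widehat{\Integer}^{\times}$ and $B_i\in\mathrm{GL}_2(\Integer)$. To produce this I would combine Thurston's hyperbolic Dehn surgery with the profinite rigidity machinery for hyperbolic $3$-manifolds: for all but finitely many slopes $\gamma_i$ the partial filling of $M$ along $\gamma_i$ is again finite-volume hyperbolic, and its profinite completion is $\widehat{\pi_1N}/\,\overline{\langle\langle A_i[\gamma_i]\rangle\rangle}$; comparing this family of quotients with the genuine fillings of $N$ — via Liu's almost rigidity \cite{Li23a} together with the profinite invariants available in the hyperbolic setting (homology of finite covers, and the almost rigidity applied to the fillings themselves) — forces the procyclic group $\overline{\langle A_i[\gamma_i]\rangle}$ to be topologically generated by a primitive integral vector up to a unit, for cofinitely many and hence all primitive $[\gamma_i]$. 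Since the property ``$A_i$ sends every primitive integral vector to a $\widehat{\Integer}^{\times}$-multiple of a primitive integral vector'' forces $A_i\in\widehat{\Integer}^{\times}\,\mathrm{GL}_2(\Integer)$ by a short computation (test $A_i$ on $e_1,e_2,e_1+e_2$ and one further vector), we obtain the $B_i$; these are realised by a homeomorphism $h\colon\partial M\to\partial N$ taking cusp $i$ to cusp $\sigma(i)$, and then $A_i[\gamma_i]$ and $h_*(\gamma_i)=B_i[\gamma_i]$ differ by the unit $\lambda_i$, so they generate the same procyclic subgroup, completing the argument.

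\emph{The main obstacle} is precisely this last step: an element of $\mathrm{GL}_2(\widehat{\Integer})$ need not be a unit multiple of an integral matrix, so the restriction of $f$ to a single peripheral $\widehat{\Integer}^2$ carries too little information — one must feed in the global rigidity of $f$, through hyperbolic Dehn surgery and the almost rigidity of finite-volume hyperbolic $3$-manifolds, to conclude that $f$ respects slopes up to the unavoidable $\widehat{\Integer}^{\times}$-ambiguity. A secondary technical point is the justification of the reductions in the second paragraph — that after conjugation the peripheral images land in the correct $\overline{Q_{\sigma(i)}}$ and that the relevant normal closures meet each $\overline{Q_{\sigma(i)}}$ in exactly the expected procyclic subgroup — which relies on the (almost) malnormality and conjugacy separability of the peripheral structure of hyperbolic $3$-manifold groups.
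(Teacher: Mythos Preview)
The paper does not prove Theorem~\ref{thm:peripheralDehnfill}: it is quoted verbatim from \cite[Theorem~A]{Xu24b} and used as a black box (see the introduction, where it is stated with attribution and invoked later in the proof of Theorem~\ref{thm:closedset}). There is therefore no ``paper's own proof'' to compare your proposal against.

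That said, a brief comment on the proposal itself. Your overall architecture --- reduce to the peripheral $\widehat{\Integer}^2$'s via Wilton--Zalesskii, extract matrices $A_i\in\mathrm{GL}_2(\widehat{\Integer})$, and then argue that each $A_i$ lies in $\widehat{\Integer}^{\times}\cdot\mathrm{GL}_2(\Integer)$ --- is the right shape, and the final algebraic observation (testing $A_i$ on a few primitive vectors) is correct. The genuine gap is exactly where you flag it: the passage from ``$\widehat{\pi_1N}/\overline{\langle\langle A_i[\gamma_i]\rangle\rangle}$ is profinitely isomorphic to a hyperbolic Dehn filling of $M$'' to ``$A_i[\gamma_i]$ is a $\widehat{\Integer}^{\times}$-multiple of a primitive integral vector'' is not a formal consequence of Liu's almost rigidity alone. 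Almost rigidity tells you the isomorphism type of the quotient lies in a finite set, but it does not by itself pin down the procyclic subgroup $\overline{\langle A_i[\gamma_i]\rangle}$ inside $\overline{Q_{\sigma(i)}}$; one needs a further argument (in Xu's paper this is the notion of \emph{peripheral $\widehat{\Integer}^{\times}$-regularity} and its verification for hyperbolic manifolds) linking the profinite quotient back to an honest slope. Your sketch gestures at this but does not supply it. Similarly, the claim that $f(R_\gamma)\cap\overline{Q_{\sigma(i)}}=\overline{\langle A_i[\gamma_i]\rangle}$ is not needed and is in fact delicate (the normal closure of one slope can meet another peripheral torus nontrivially); fortunately your argument only requires that $f(R_\gamma)$ equals the closed normal closure of the $A_i[\gamma_i]$, which is immediate.
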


\begin{remark}
The notation of the Dehn filling parameter is specified in detail in Section~\ref{subsec:Dehn fill notation}.
With a slight abuse of notation, the boundary components arising from the topological compactification of cusps are also referred to as \textit{cusps}.
\end{remark}

Building on this approach, we are able to determine the profinite rigidity of a cusped finite-volume hyperbolic 3-manifold from a class of known examples via geometric convergence.
The space of all finite-volume hyperbolic 3-manifolds can be endowed with a geometric topology, which measures when hyperbolic 3-manifolds converge quasi-isometrically on increasingly large compact subsets to their geometric limit, with progressively fine quasi-isometry parameters.

\begin{thm:closedset}
The set of all finite-volume hyperbolic 3-manifolds that are profinitely rigid in $\mathfrak{M}$ is closed in the space of all finite-volume hyperbolic 3-manifolds equipped with the geometric topology.
\end{thm:closedset}

A straightforward illustration of Theorem~\ref{thm:closedset} is as follows.  Suppose that $M$ is a finite-volume hyperbolic 3-manifold and $T$ is a cusp of $M$. If infinitely many Dehn fillings of $M$ on $T$ is profinitely rigid in $\mathfrak{M}$, then so is $M$ itself.

\begin{remark}
Suppose that \(N\) is a fibered manifold $N$ with a fixed fiber structure $\Sigma \to N\to S^1$. Following the notation of~\cite{Le21a}, a compact, embedded (possibly disconnected) 1-manifold in $N$ is called \textit{monotonic} if each component is either transverse to the fibration or level, i.e., lying on a fiber surface.
In fact, a compact essential level component is exactly a bubble in $N$ introduced before.

Furthermore, \cite{Le21a} proved that if a sufficiently long Dehn filling $M_\gamma$ of a given cusped hyperbolic manifold $M$ is fibered,
then \(M\) is obtained from a fibered hyperbolic manifold $N$ by removing a monotonic submanifold.
Thus, if a hyperbolic manifold \(M\) is the geometric limit of a sequence of fibered hyperbolic manifolds, then $M$ is either a fibered or a bubbled-drilled manifold, since drilling transverse curves produces a new fibered manifold and preserves the level properties of other components.
However, the fiber type of the resulting bubbled-drilled manifold may not be controlled.
\end{remark}

Based on Theorem~\ref{thm:closedset}, we can reduce the problem of profinite rigidity of finite-volume hyperbolic manifolds to the closed case.
\begin{cor:reducedcase}
Any of the following conditions is sufficient to establish the profinite rigidity of all cusped finite-volume hyperbolic manifolds in $\mathfrak{M}$.

\begin{enumerate}[label=(\arabic*)]
    \item The profinite rigidity of all closed hyperbolic 3-manifolds in $\mathfrak{M}$.
    \item The profinite rigidity of all closed, non-arithmetic hyperbolic 3-manifolds in $\mathfrak{M}$.
    \item The profinite rigidity of all closed, fibered hyperbolic 3-manifolds in $\mathfrak{M}$.
\end{enumerate}
\end{cor:reducedcase}

\subsection*{Organization of the paper}
The organization of this paper is as follows.

In Section~\ref{sec:pre}, we specify our notation about Dehn filling and review profinite completions of finitely generated 3-manifold groups.

In Section~\ref{sec:limit}, we review the geometric topology on the set of finite-volume hyperbolic manifolds and examine the limit behavior of profinite rigidity of finite-volume hyperbolic manifold under geometric topology.

In Section~\ref{sec:fiberedness} and Section~\ref{sec:hyperbolicity}, we propose the bubble-drilling construction and analyze the properties of the resulting manifolds. We prove fiberedness of infinitely many Dehn filling results and establish a hyperbolization criterion.

In Section~\ref{sec:example}, we present some insightful examples of profinitely rigid hyperbolic bubbled-drilled manifold given by bubble construction in detail, which completes the proof of Theorem~\ref{cor:example}.

\subsection*{Acknowledge}
The author sincerely thanks Xiaoyu Xu for helpful conversations on profinite rigidity and valuable advice on a preliminary version of this paper. Also, the author thanks his advisor Yi Liu for helpful comments.

\section{Preliminaries}\label{sec:pre}

\subsection{Profinite completion}\label{sec:pre:prof}

We begin this section with definition of profinite completion.

\begin{definition}
  Let \(G\) be an abstract group.
  The \textit{profinite completion} of \(G\) is defined as the inverse limit of all quotient groups \(G/N\) where \(N\) ranges over all the finite-index normal subgroups of \(G\).
  \begin{equation*}
    \widehat{G}=\underset{N\lhd_{f.i.}G}{\varprojlim} G/N.
  \end{equation*}
\end{definition}

It follows from a theorem of Nikolov-Segal~\cite{NS03a} that, for any two finitely generated groups $G_1$ and $G_2$, any isomorphism between them as abstract groups is indeed an isomorphism as profinite groups—specifically, a homeomorphism with respect to the product topology.
Therefore, in the following context, we do not distinguish between an abstract isomorphism and a continuous one, as we focus exclusively on finitely generated groups.

In a series of works, Wilton-Zalesskii showed that among 3-manifold groups, the profinite completion determines hyperbolicity.
The closed case was proven in~\cite{WZ17a} and the cusped case was proven in~\cite{WZ17b}, see also~\cite[Theorem 4.18 and 4.20]{Re18a}.
Moreover, when $M$ is a cusped hyperbolic manifold, \cite[Proposition 3.1]{WZ19a} showed that the conjugacy classes of peripheral subgroups in $\widehat{\pi_1M}$ are exactly the conjugacy classes of the maximal closed subgroups isomorphic to $\widehat{\mathbb{Z}}^2$.  We conclude these properties in the following proposition.

\begin{proposition}[\cite{WZ17a,WZ17b,WZ19a}]\label{prop:detect hyperbolic}
Let $M$ be a finite-volume hyperbolic 3-manifold. If $N$ is a compact, orientable 3-manifold such that $\widehat{\pi_1M}\cong \widehat{\pi_1N}$, then $N$ is also a finite-volume hyperbolic 3-manifold, and has the same number of cusps as $M$.
\end{proposition}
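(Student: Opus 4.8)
Since the proposition collects together several theorems of Wilton--Zalesskii, the plan is to cite them in the right order; the one step that needs an extra argument is the exact count of cusps.

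\textbf{Hyperbolicity.} First I would note that $\widehat{\pi_1 M}$ is infinite (as $\pi_1 M$ is infinite and residually finite), so $\pi_1 N$ is infinite; in particular $N$ is neither $S^3$ nor a spherical space form. The main input is the Wilton--Zalesskii detection of hyperbolicity from the profinite completion of a compact orientable $3$-manifold group: the closed case is \cite{WZ17a} and the cusped case is \cite{WZ17b} (see \cite[Theorems 4.18 and 4.20]{Re18a}). These statements already subsume the prime-decomposition reduction: $\pi_1 M$ is one-ended, torsion-free and not infinite cyclic, so the profinite completion forces $\pi_1 N$ to be freely indecomposable, hence $N$ prime, and then the hyperbolicity criterion applies. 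Therefore $\widehat{\pi_1 N}\cong\widehat{\pi_1 M}$ makes $N$ a finite-volume hyperbolic $3$-manifold. Finally, $M$ has cusps if and only if $\pi_1 M$ contains a $\Integer^2$, equivalently $\widehat{\pi_1 M}$ contains a $\widehat{\Integer}^2$; carrying this across the isomorphism shows $N$ is closed exactly when $M$ is, so it remains to match the number of cusps in the cusped case.

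\textbf{Counting cusps.} Assume now $M$ (hence $N$) is cusped. Here I would use \cite[Proposition 3.1]{WZ19a}: in $\widehat{\pi_1 M}$ the conjugacy classes of the closures of peripheral subgroups are precisely the conjugacy classes of maximal subgroups isomorphic to $\widehat{\Integer}^2$, and likewise for $N$. The isomorphism $\widehat{\pi_1 M}\cong\widehat{\pi_1 N}$ thus induces a bijection between these two families of conjugacy classes. To conclude I would check that, for a cusped hyperbolic $M$, the map sending a cusp to the conjugacy class of the closure of its peripheral subgroup is a bijection onto this family: this uses that the peripheral structure is efficiently embedded, i.e.\ each peripheral subgroup is separable with subspace profinite topology equal to its own profinite topology, and that non-conjugate peripheral subgroups of $\pi_1 M$ yield non-conjugate closures in $\widehat{\pi_1 M}$ --- separability properties that are part of the Wilton--Zalesskii analysis underlying \cite[Proposition 3.1]{WZ19a}. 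Combining, $M$ and $N$ have the same number of conjugacy classes of maximal $\widehat{\Integer}^2$-subgroups, hence the same number of cusps.

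\textbf{Expected main obstacle.} Everything but the last step is a direct citation; the substantive point is making the cusp count \emph{exact}. One must rule out two phenomena: distinct cusps whose peripheral closures become conjugate in $\widehat{\pi_1 M}$, and maximal $\widehat{\Integer}^2$-subgroups not arising from any cusp. Both are handled by \cite[Proposition 3.1]{WZ19a} together with the efficiency of the cusped peripheral structure; once this is in hand, the proposition follows formally from the hyperbolicity detection of \cite{WZ17a,WZ17b} combined with this cusp-matching.
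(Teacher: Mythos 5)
Your proposal is correct and matches the paper's treatment: the paper does not give an independent proof but, exactly as you do, invokes the Wilton--Zalesskii hyperbolicity detection (\cite{WZ17a} closed, \cite{WZ17b} cusped, cf.\ \cite[Theorems 4.18 and 4.20]{Re18a}) and then \cite[Proposition 3.1]{WZ19a} identifying the conjugacy classes of peripheral subgroups with those of maximal closed $\widehat{\Integer}^2$-subgroups to match the number of cusps. Your extra care about efficiency of the peripheral structure is exactly the content packaged into that proposition, so nothing is missing.
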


\subsection{Dehn fillings}\label{subsec:Dehn fill notation}
For a compact, orientable \(3\)-manifold \(M\) with non-empty toral boundaries, we are able to perform many Dehn fillings on its boundary.

We denote the boundaries by \(\partial M=\partial_{1}M \sqcup \cdots \sqcup \partial_{k}M\).
A \textit{Dehn filling parameter} \(\gamma_{i}\) on \(\partial_{i}M\) is either the isotopy class of an essential
simple closed curve in \(\partial_{i}M\) or an empty set, denoted by \(\infty\) simply.
Each simple closed curve \(\gamma_{i}\) on \(\partial_{i}M\) determines a Dehn filling by attaching a solid torus whose meridian matches \(\gamma_{i}\), while \(\infty\) represents leaving this boundary unfilled.
Given a \textit{Dehn filling vector} \(\gamma=(\gamma_{1},\ldots, \gamma_{k})\), let \(M_{\gamma}\) denote the manifold obtained by performing Dehn filling with these parameters.

We view the essential oriented simple closed curves as primitive integral classes in $H_1(\partial_iM;\mathbb{R})\cong \mathbb{R}^2$, and let \(\overline{\mathbb{R}^2}\) be the one-point compactification of the real vector space $H_1(\partial_iM;\mathbb{R})$, where the \(\infty\) parameter exactly corresponds to the infinity point. This induces a well-defined topology on the space of Dehn filling vectors. After specifying a basis for \(H_{1}(\partial_{i}M;\mathbb{Z})\cong \Integer\oplus \Integer\), the Dehn filling coefficient \(\gamma_{i}\) can be parameterized by a coprime couple of integers \((p_{i},q_{i})\) or a $\infty$ symbol.

We recall Thurston’s famous hyperbolic Dehn surgery theorem~\cite[Theorem 5.8.2]{Th80a}, with the following version formulated as in~\cite[Theorem 2.1]{Lac19}.

\begin{proposition}\label{prop: dehn surgery theorem}
    Let $M$ be a cusped finite-volume hyperbolic 3-manifold.
    \begin{enumerate}[label=(\arabic*)]
        \item There exists a neighbourhood $U$ of $(\infty,\ldots,\infty)$ such that, for any Dehn filling vector $\gamma\in U$, $M_\gamma$ is also a finite-volume hyperbolic 3-manifold.
        \item There exists a constant $\epsilon_0=\epsilon_0(M)>0$ such that, for any $0<\epsilon<\epsilon_0$, we can choose a smaller neighbourhood $V_\epsilon$ of $(\infty,\ldots,\infty)$ contained in $U$, which satisfies that $M$ is homeomorphic to the $\epsilon$-thick part of $M_{\gamma}$ for any $\gamma\in V_\epsilon$.
    \end{enumerate}
\end{proposition}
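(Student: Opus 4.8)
This proposition packages Thurston's hyperbolic Dehn surgery theorem, so the plan is to recall the deformation-theoretic proof of part~(1) and then upgrade it to the metric control demanded by part~(2); I would take the bare existence statement essentially from~\cite{Th80a} (the formulation given here being the one isolated in~\cite{Lac19}) and concentrate on how the ``fixed homeomorphism type of the thick part'' assertion is extracted. For part~(1): the complete finite-volume hyperbolic structure on $\mathring M$ is recorded by a discrete faithful holonomy representation $\rho_0\colon\pi_1M\to\mathrm{PSL}(2,\Complex)$ carrying each peripheral $\Integer^2$ to parabolics. Combining Mostow--Prasad rigidity with the computation that $H^1(\pi_1M;\mathfrak{sl}_2\Complex)$ has complex dimension equal to the number $k$ of cusps, the character variety is near $[\rho_0]$ a smooth complex $k$-manifold, on which the complex lengths $u_i$ of the meridians $\mu_i$ form holomorphic coordinates, and the complex lengths $v_i$ of the longitudes are holomorphic functions $v_i=\phi_i(u_1,\dots,u_k)$ with $\phi_i(0)=0$. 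To realize the filling along a simple closed curve $\gamma_i=p_i\mu_i+q_i\lambda_i$ one imposes the Neumann--Zagier completeness equation $p_iu_i+q_iv_i=2\pi\sqrt{-1}$ on the filled cusps; by the implicit function theorem this system has a unique solution near $u=0$ once $\min_i(p_i^2+q_i^2)$ is large, and the metric completion of the resulting incomplete structure on $\mathring M$ is a complete finite-volume hyperbolic metric on $M_\gamma$ in which each filling-torus core is a closed geodesic (coprimality of $(p_i,q_i)$ forces cone angle $2\pi$, so the completion is a genuine manifold). The set of admissible $\gamma$ is an open neighbourhood $U$ of $(\infty,\dots,\infty)$.

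For part~(2), set $\epsilon_0=\min\{\mu_3,\operatorname{sys}(M)\}$, where $\mu_3$ is the Margulis constant in dimension three and $\operatorname{sys}(M)$ is the length of the shortest closed geodesic of $M$. For $0<\epsilon<\epsilon_0$ the $\epsilon$-thin part of $M$ is a disjoint union of cusp neighbourhoods only, so the $\epsilon$-thick part $M^{\ge\epsilon}$ is a compact manifold with torus boundary homeomorphic to $M$. I would then strengthen part~(1) to the assertion that, as $\gamma\to(\infty,\dots,\infty)$, the core lengths $\ell_i(\gamma)$ tend to $0$ (they are comparable to $1/(p_i^2+q_i^2)$ by Neumann--Zagier) and $M_\gamma$ converges geometrically to $M$, so that the $\epsilon$-thick part of $M_\gamma$ becomes bilipschitz close to $M^{\ge\epsilon}$. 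Shrinking $V_\epsilon\subset U$ so that for every $\gamma\in V_\epsilon$ one has $\ell_i(\gamma)<\epsilon$ on each filled cusp and $M_\gamma$ carries no other closed geodesic of length $<\epsilon$, the $\epsilon$-thin part of $M_\gamma$ is exactly the disjoint union of the Margulis tubes of the core geodesics and the cusp neighbourhoods of the unfilled cusps. A Margulis tube around a core geodesic is a solid torus that is a regular neighbourhood of that core; since it and the filling solid torus are both regular neighbourhoods of the same core geodesic, they are isotopic. Deleting all such Margulis tubes and truncating the remaining cusps therefore recovers $M$ up to homeomorphism, giving $M_\gamma^{\ge\epsilon}\cong M$.

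The genuine content, and the step I expect to be the main obstacle, is the quantitative upgrade of part~(1) invoked above: uniform decay of the core lengths, the absence of spurious short geodesics in $M_\gamma$, and geometric (bilipschitz) convergence $M_\gamma\to M$ with controlled constants. This is exactly what converts the bare existence of the fillings into control of the homeomorphism type of their thick parts, and I would expect to obtain it either from Thurston's geometric-limit analysis or from Neumann--Zagier type estimates; by comparison the character-variety computation, the implicit function theorem, and the fact that two tubular neighbourhoods of a core curve in a solid torus are isotopic are routine.
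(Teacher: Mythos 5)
The paper does not prove this proposition at all: it is quoted as background, with the proof delegated to Thurston's notes \cite[Theorem 5.8.2]{Th80a} and to the formulation in \cite[Theorem 2.1]{Lac19}. Your sketch reconstructs essentially the argument that lives in those references, so there is no conflict of approach, only a difference in how much is taken on faith. Your part (1) is the standard Neumann--Zagier deformation argument; the one place where the phrasing is looser than the actual proof is the ``implicit function theorem'' step: what one really shows is that the generalized Dehn surgery coefficient map $u\mapsto(p,q)$ (defined by $p_iu_i+q_iv_i=2\pi\sqrt{-1}$ with \emph{real} $p_i,q_i$) extends continuously to $u=0$ with value $(\infty,\dots,\infty)$ and is open there, whence every integral coprime $(p_i,q_i)$ of sufficiently large norm is realized; this is equivalent to what you wrote but is the cleaner way to get an open neighbourhood $U$ in the compactified coefficient space. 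Your part (2) is the right mechanism (thick--thin decomposition, core geodesics short, Margulis tube isotopic to the filling solid torus by uniqueness of tubular neighbourhoods), and you correctly isolate the genuine content --- uniform decay of core lengths, absence of spurious short geodesics, and bilipschitz control of the thick parts --- as the input to be imported; these are exactly the Neumann--Zagier estimates and the drilling/filling bilipschitz theorems that \cite{Lac19} packages, and they are also what underlies Proposition~\ref{prop:limit sequence} later in the paper. So the proposal is a correct outline of the standard proof, with the quantitative steps honestly flagged rather than proved; since the paper itself treats the statement as citable, nothing further is required.
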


\section{Profinite rigidity and geometric convergence}\label{sec:limit}

\subsection{Geometric topology and geometric convergence}

Geometric topology is introduced in~\cite{Ch50a} first, known as Chabauty topology, and has many equivalent forms. We briefly review one of its versions concerning hyperbolic 3-manifolds, and refer the readers to~\cite{Ca06a} and~\cite[Chapter E]{BP92a} for more backgrounds.

\def\DH{\mathcal{D}_{\ast}(\mathcal{I}^+(\mathbb{H}^3))}%Torsion-free Kleinian groups

We begin with the definition of the geometric topology on the space of torsion-free Kleinian groups.

\begin{definition}\label{def: equivalent definition for geometric convergence}

  A sequence of torsion-free Kleinian groups \(\{\Gamma_{i}\}_{i=0}^{+\infty}\) \textit{converges geometrically} to a torsion-free Kleinian groups \(\Gamma_{\infty}\) if and only if there exist a sequence of parameters \(\{(R_{i},K_{i})\}\) and a sequence of maps \(F_{i}:B_{R_{i}}(0)\to \Hyperb^{3}\), where \(B_{R_{i}}(0)\) denotes a 3-dimensional ball of radius \(R_i\) centered at a fixed origin \(o\) of \(\Hyperb^{3}\), such that the following conditions hold.
\begin{enumerate}[label=(\arabic*)]
    \item $\lim R_i=+\infty$ and $\lim K_i=1$, as \(i\) tends to \(+\infty\).
    \item Each map \(F_{i}\) is a \(K_{i}\)-bilipschitz diffeomorphism onto its image, and $F_i(o)=o$.
    \item The maps $F_i$ converge locally uniformly to the identity map as $i\to +\infty$.
    \item If let \(M_{\infty}=\Hyperb^{3}/\Gamma_{\infty}\), \(M_{i}=\Hyperb^{3}/\Gamma_{i}\), and \(B_{i}'=B_{R_{i}}(0)/\Gamma_{i}\subset M_{i}\), then each \(F_{i}\) descends to a well-defined map \(f_{i}:B_{i}'\to M_{\infty}\), where \(f_{i}\) is also a \(K_{i}\)-bilipschitz diffeomorphism onto its image.
  \end{enumerate}

  Further, the {\em geometric topology} on the set of torsion-free Kleinian groups is topologized by geometric convergence.
\end{definition}

\def\FVH{\mathcal{H}_3} %finite-volume hyperbolic 3-manifolds

Let $\FVH$ denote the collection of all isometry classes of finite-volume complete hyperbolic 3-manifolds. There is a natural map \(\phi\) from the set of all finite-covolume torsion-free Kleinian group to \(\FVH\), which maps \(\Gamma\) to \(\Hyperb^3/\Gamma\).

\begin{definition}
The {\em geometric topology} on $\FVH$ is the finest topology such that $\phi$ is continuous.
\end{definition}

Strictly speaking, a sequence of finite-volume hyperbolic 3-manifolds \(\{M_{i}\}_{i=0}^{\infty}\subseteq \FVH\) {\em geometrically converges} to \(M\in \FVH\) if there exist torsion-free Kleinian groups \(\{\Gamma_{i}\}_{i=0}^{\infty}\) and $\Gamma$ such that $M_i\cong \mathbb{H}^3/\Gamma_i$, $M\cong \mathbb{H}^3/\Gamma$, and $\Gamma_i$ geometrically converges to $\Gamma$. In other words, by appropriately selecting basepoints, the manifolds $M_i$ and $M$ become increasingly similar within metric balls centered at these basepoints, as both the index $i$ and the radius of the ball tend to infinity.

A typical example of geometric convergence in dimension two is the process of shrinking a non-separating simple closed curve on a closed surface to make its length tend to zero. In the limit state, this results in a surface with two cusps. Likewise, in dimension three, all convergent sequences can be characterized as follows.

\begin{proposition}\label{prop:limit sequence}
   \begin{enumerate}[wide,labelwidth=!, labelindent=0pt]
        \item Let $M$ be a cusped finite-volume hyperbolic 3-manifold, and let $\{\gamma^{(i)}\}_{i=0}^{+\infty}$ be Dehn filling vectors on $M$ converging to $(\infty,\cdots,\infty)$. Then up to deleting finitely many initial terms for where $M_{\gamma^{(i)}}$ might be non-hyperbolic, the remaining terms $\{M_{\gamma^{(i)}}\}_{i=i_0}^{+\infty}$ geometrically converges to $M$ in $\FVH$.
        \item  Let $\{M_i\}_{i=0}^{+\infty}$ be a sequence in $\FVH$ which geometrically converges to $M\in \FVH$. Then up to deleting finitely many initial terms, $M_i$ is a Dehn filling of $M$ with Dehn filling vector $\gamma^{(i)}$, and $\gamma^{(i)}$ converges to $(\infty,\cdots,\infty)$ as $i\to +\infty$.
   \end{enumerate}
\end{proposition}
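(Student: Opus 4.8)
The plan is to read Proposition~\ref{prop:limit sequence} as the translation of Thurston's hyperbolic Dehn surgery theorem (part~(1)) and of its J{\o}rgensen--Thurston converse (part~(2)) into the language of Definition~\ref{def: equivalent definition for geometric convergence}; the substance is in part~(2).

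For part~(1), I would first use Proposition~\ref{prop: dehn surgery theorem}(1) to discard the finitely many indices for which $M_{\gamma^{(i)}}$ fails to be hyperbolic. For a fixed $\epsilon<\epsilon_0(M)$, Proposition~\ref{prop: dehn surgery theorem}(2) identifies the compact core of $M$ with the $\epsilon$-thick part of $M_{\gamma^{(i)}}$ for all large $i$, and the strong, metric form of Thurston's theorem (see \cite[Chapter~E]{BP92a} and \cite[Theorem~5.8.2]{Th80a}) refines this to a $(1+\delta_i(\epsilon))$-bilipschitz identification with $\delta_i(\epsilon)\to 0$ as $i\to\infty$ for each fixed $\epsilon$, since $\gamma^{(i)}\to(\infty,\dots,\infty)$ and the filled structures vary continuously with the filling slope. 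Letting $\epsilon$ tend to $0$ and choosing the indices diagonally then produces radii $R_i\to\infty$, constants $K_i\to 1$, maps $F_i$ on exhausting balls of $\Hyperb^3$, and their descents $f_i$, which is exactly the data required by Definition~\ref{def: equivalent definition for geometric convergence}; hence $M_{\gamma^{(i)}}\to M$ geometrically. (Equivalently, one may argue that the pulled-back holonomies $\pi_1M\to\mathrm{PSL}(2,\Complex)$ of the filled structures converge to the holonomy of the complete structure on $M$, and that along a Dehn-filling family this algebraic convergence upgrades to geometric convergence of the images.)

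For part~(2), I would run the construction in reverse. After conjugating so that the fixed origin $o$ of $\Hyperb^3$ projects into the thick part of $M$, fix a compact core $C=M^{\geq\epsilon}$ of $M$, where $\epsilon>0$ is small enough --- in particular below the Margulis constant --- that the $\epsilon$-thin part of $M$ consists exactly of disjoint open horoball neighbourhoods of its $k$ cusps, so $\partial C$ is a union of $k$ tori. For $i$ large the maps $f_i\colon B_i'\to M$ from geometric convergence satisfy $C\subseteq f_i(B_i')$ --- since $f_i(B_i')$ contains arbitrarily large metric balls about the basepoint --- so $U_i:=f_i^{-1}(C)\subseteq M_i$ is a $(1+o(1))$-bilipschitz copy of $C$ bounded by $k$ tori. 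A volume-and-injectivity-radius comparison, available because $f_i$ is almost isometric onto its image in the finite-volume manifold $M$, shows that for $i$ large $M_i\setminus\mathring U_i$ lies in the $\mu$-thin part of $M_i$ for the Margulis constant $\mu$. The Margulis lemma together with $\mathrm{vol}(M_i)<\infty$ then forces each component of $M_i\setminus\mathring U_i$ to be either a rank-two cusp neighbourhood $T^2\times[0,\infty)$ or a Margulis tube --- a solid torus --- around a short closed geodesic; matching the $k$ boundary tori of $U_i\cong C$ with these components exhibits $M_i$ as a Dehn filling of the compact core of $M$, with $\gamma^{(i)}_j=\infty$ on the cusps that remain cusps and $\gamma^{(i)}_j$ equal to the meridian of the attached solid torus otherwise, so $M_i=M_{\gamma^{(i)}}$. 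To see $\gamma^{(i)}\to(\infty,\dots,\infty)$, note that the approximation improves with $i$, so one may take $C=M^{\geq\epsilon_i}$ with $\epsilon_i\to 0$; then the boundary tori of the solid-torus components of $M_i\setminus\mathring U_i$ are $(1+o(1))$-bilipschitz to horotori sinking arbitrarily deep into the cusps of $M$, hence become arbitrarily short, so their core geodesics are even shorter; and a short core geodesic forces a long filling slope, by the standard relation between the normalized length of $\gamma^{(i)}_j$ on the maximal cusp torus of $M$ and the length of the core geodesic (the Neumann--Zagier asymptotics), whence $\gamma^{(i)}_j\to\infty$ in $\overline{\Real^2}$ for every $j$.

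The main obstacle is the structural step in part~(2): showing that $M_i\setminus\mathring U_i$ is precisely a disjoint union of cusp neighbourhoods and solid tori with no extra topology, and that $U_i$ really captures the whole compact core of $M$ for large $i$. It is the hypothesis $M_i\in\FVH$ that, via the Margulis lemma, pins each component of $M_i\setminus\mathring U_i$ down to a cusp neighbourhood or a solid torus rather than some more complicated finite- or infinite-volume piece; and one must still verify that the radii $R_i$ eventually exceed the diameter of $C$ and that the injectivity-radius comparison confines the thin part of $M_i$ to the thin part of $M$. Granting this dichotomy, the identification $M_i=M_{\gamma^{(i)}}$ and the convergence of the filling slopes are elementary.
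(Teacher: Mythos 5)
Your argument is correct and is essentially the standard Thurston--J{\o}rgensen proof that the paper simply outsources to the literature: part~(1) is the metric form of hyperbolic Dehn surgery (\cite[Proposition E.6.29]{BP92a}) and part~(2) is \cite[Proposition E.2.4]{BP92a} (see also \cite[Theorem 2.1]{Lac19}), whose proofs run exactly along your thick--thin/Margulis-lemma lines. The one step you flag as the main obstacle --- showing that $M_i\setminus \mathring U_i$ lies in the thin part of $M_i$, so that its components are forced to be cusp neighbourhoods or Margulis tubes --- is precisely the technical content of those cited propositions, and your sketch of it (connectedness of the thick part, short essential loops on the image horotori, finite volume) is the right one.
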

\begin{proof}
    The first conclusion follows from \cite[Proposition E.6.29]{BP92a}, and the second one is a consequence of \cite[Proposition E.2.4]{BP92a}; see also \cite[Theorem 2.1]{Lac19}.
\end{proof}

In particular, every closed hyperbolic manifold is an isolated point under geometric topology.

\begin{lemma}\label{lem: limit unique}
    Any converging sequence in $\FVH$ has a unique limit. In other words, if $\{M_i\}_{i=0}^{+\infty}\subseteq \FVH$ converges geometrically to two hyperbolic 3-manifolds $M$ and \(N\) simultaneously, then $M$ is homeomorphic, and hence isometric, to $N$.
\end{lemma}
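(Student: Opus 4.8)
The plan is to reduce the uniqueness of geometric limits to the rigidity phenomena already recorded in this section, chiefly Proposition~\ref{prop:limit sequence}. Suppose $\{M_i\}$ converges geometrically both to $M$ and to $N$ in $\FVH$. The first case to dispose of is when $M$ (equivalently, by symmetry, $N$) is closed. By the remark following Proposition~\ref{prop:limit sequence}, a closed hyperbolic manifold is an isolated point in the geometric topology, so convergence to $M$ forces $M_i\cong M$ for all large $i$; then $\{M_i\}$ is eventually constant, and its limit $N$ must be homeomorphic to that constant value $M$ as well. So from now on I may assume both $M$ and $N$ are cusped.

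In the cusped case I would invoke Proposition~\ref{prop:limit sequence}(2) twice. Convergence to $M$ gives, after discarding finitely many terms, a presentation $M_i \cong M_{\gamma^{(i)}}$ with $\gamma^{(i)} \to (\infty,\dots,\infty)$, and by Proposition~\ref{prop: dehn surgery theorem}(2), shrinking to a tail, $M$ is homeomorphic to the $\epsilon$-thick part of $M_i$ for a fixed $\epsilon = \epsilon_0(M)/2$ and all sufficiently large $i$. Symmetrically, convergence to $N$ makes $N$ homeomorphic to the $\epsilon'$-thick part of $M_i$ for a fixed $\epsilon'=\epsilon_0(N)/2$ and all large $i$. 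Since $M_i$ is a fixed finite-volume hyperbolic $3$-manifold once $i$ is large, its thick parts for all parameters below $\min\{\epsilon_0(M),\epsilon_0(N)\}/2$ are mutually homeomorphic (the topology of the $\epsilon$-thick part stabilizes once $\epsilon$ is smaller than the shortest geodesic length and the Margulis constant of $M_i$ — one only cuts off the same rank-two cusp neighborhoods). Choosing one index $i$ large enough for both tails, we obtain $M \cong (\epsilon\text{-thick part of }M_i) \cong (\epsilon'\text{-thick part of }M_i) \cong N$.

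The one point that needs a little care — and the step I expect to be the main obstacle — is the claim that the $\epsilon$-thick and $\epsilon'$-thick parts of the same manifold $M_i$ are homeomorphic, since Proposition~\ref{prop: dehn surgery theorem}(2) only asserts this for $\epsilon,\epsilon'$ below the \emph{a priori distinct} constants $\epsilon_0(M)$ and $\epsilon_0(N)$. This is resolved by the Margulis lemma: for a \emph{fixed} hyperbolic $3$-manifold $M_i$, there is a threshold $\mu_i>0$ (below the Margulis constant and below the length of the shortest closed geodesic of $M_i$) such that the $t$-thick part of $M_i$ is, up to homeomorphism, independent of $t$ for $0<t<\mu_i$ — decreasing $t$ in this range only enlarges the horoball cusp neighborhoods that are removed, which is an isotopy. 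Hence it suffices to take $i$ so large that both $\epsilon<\mu_i$ and $\epsilon'<\mu_i$ hold (possible since the hyperbolic Dehn surgery theorem, or a direct geometric-convergence estimate, gives $\mu_i\to\infty$, or at least $\liminf \mu_i>0$ suffices together with shrinking $\epsilon,\epsilon'$), and the chain of homeomorphisms above goes through. Finally, once $M$ and $N$ are homeomorphic finite-volume hyperbolic $3$-manifolds, Mostow--Prasad rigidity upgrades the homeomorphism to an isometry, completing the proof.
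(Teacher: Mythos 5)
Your overall strategy is the paper's: apply Proposition~\ref{prop:limit sequence}(2) to both limits to realize $M_i$ simultaneously as a Dehn filling $M_{\gamma^{(i)}}$ of $M$ and $N_{\eta^{(i)}}$ of $N$, then compare thick parts via Proposition~\ref{prop: dehn surgery theorem}(2) and invoke Mostow--Prasad. However, the step you yourself flag as the main obstacle is handled incorrectly, and the gap is genuine. You take two different cutoffs $\epsilon=\epsilon_0(M)/2$ and $\epsilon'=\epsilon_0(N)/2$ and then try to identify the $\epsilon$- and $\epsilon'$-thick parts of $M_i$ by pushing both below a threshold $\mu_i$ under the shortest geodesic length of $M_i$. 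This fails for two reasons. First, $\liminf \mu_i>0$ is false in exactly the case at hand: since $M$ is cusped and $\gamma^{(i)}\to(\infty,\dots,\infty)$, the core geodesics of the filling solid tori in $M_{\gamma^{(i)}}$ have length tending to $0$, so the shortest geodesic length of $M_i$, and hence $\mu_i$, tends to $0$, not to $\infty$. Second, and more fundamentally, the $t$-thick part does stabilize for $t$ below the shortest geodesic length, but it stabilizes to the compact core of $M_i$ itself (only cusp neighborhoods are removed), \emph{not} to $M$: the homeomorphism $M\cong(\epsilon\text{-thick part of }M_i)$ in Proposition~\ref{prop: dehn surgery theorem}(2) depends on $\epsilon$ being \emph{larger} than the core geodesic lengths, so that the Margulis tubes around the cores are cut off. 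The thick-part topology genuinely changes as $t$ crosses the core geodesic length, which for large $i$ lies inside the interval $\bigl(0,\min\{\epsilon_0(M),\epsilon_0(N)\}/2\bigr)$ you are working in. So the chain $M\cong(\epsilon\text{-thick})\cong(\epsilon'\text{-thick})\cong N$ is not established.

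The fix is simple and is what the paper does: Proposition~\ref{prop: dehn surgery theorem}(2) holds for \emph{every} $0<\epsilon<\epsilon_0$, so choose a single common $\epsilon<\min\{\epsilon_0(M),\epsilon_0(N)\}$ at the outset, then take $j$ large enough that $\gamma^{(j)}\in V_\epsilon(M)$ and $\eta^{(j)}\in V_\epsilon(N)$. Then $M$ and $N$ are each homeomorphic to the $\epsilon$-thick part of $M_j$ for the \emph{same} $\epsilon$; Mostow--Prasad rigidity identifies the two hyperbolic metrics on $M_j$ (one inherited from $M_{\gamma^{(j)}}$, one from $N_{\eta^{(j)}}$), so this thick part is unambiguous and $M\cong N$. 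Your reduction of the closed case to isolated points is fine, though it is already subsumed by Proposition~\ref{prop:limit sequence}(2).
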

\begin{proof}
It follows from Proposition~\ref{prop:limit sequence}(2) that, after deleting finitely many initial terms in $\{M_i\}$, there exist Dehn filling vectors $\{\gamma^{(i)}\}$ on $M$ and $\{\eta^{(i)}\}$ on $N$, both converging to $(\infty,\cdots, \infty)$, such that $M_{\gamma^{(i)}}\cong M_i\cong N_{\eta^{(i)}}$ for all sufficiently large $i$.
By Proposition~\ref{prop: dehn surgery theorem}(2), there exists a constant $\epsilon>0$, depending on $M$ and $N$, and a sufficiently large index $j$, such that $M$ is homeomorphic to the $\epsilon$-thick part of $M_{\gamma^{(j)}}\cong M_j$, and $N$ is homeomorphic to the $\epsilon$-thick part of $N_{\eta^{(j)}}\cong M_j$.
Therefore, $M$ and $N$ are homeomorphic, since the hyperbolic structure on $M_j$ is unique up to isometry according to the Mostow-Prasad rigidity theorem.
\end{proof}

\subsection{Profinite rigidity of a geometric limit}

\begin{theorem}\label{thm:closedset}

The set of all finite-volume hyperbolic 3-manifolds that are profinitely rigid in $\mathfrak{M}$ is closed in the space of all finite-volume hyperbolic 3-manifolds equipped with the geometric topology.

\end{theorem}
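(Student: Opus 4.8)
The plan is to show that if $M\in\FVH$ is a geometric limit of profinitely rigid finite-volume hyperbolic manifolds, then $M$ itself is profinitely rigid. So suppose $\{M_i\}$ is a sequence of profinitely rigid hyperbolic manifolds converging geometrically to $M$, and suppose $N\in\mathfrak{M}$ satisfies $\widehat{\pi_1 M}\cong\widehat{\pi_1 N}$; I must prove $N\cong M$. If $M$ is closed, it is an isolated point of $\FVH$ by Proposition~\ref{prop:limit sequence}, so $M=M_i$ for large $i$ and there is nothing to prove; hence assume $M$ has $k>0$ cusps. By Proposition~\ref{prop:limit sequence}(2), after discarding finitely many terms we may write $M_i\cong M_{\gamma^{(i)}}$ for Dehn filling vectors $\gamma^{(i)}\to(\infty,\ldots,\infty)$.

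The key input is Proposition~\ref{prop:detect hyperbolic}: since $\widehat{\pi_1 M}\cong\widehat{\pi_1 N}$ and $M$ is finite-volume hyperbolic with $k>0$ cusps, $N$ is also finite-volume hyperbolic with exactly $k$ cusps. Now apply Theorem~\ref{thm:peripheralDehnfill} to the profinite isomorphism $\widehat{\pi_1 M}\cong\widehat{\pi_1 N}$: there is a homeomorphism $h:\partial M\to\partial N$ such that $\widehat{\pi_1 M_\gamma}\cong\widehat{\pi_1 N_{h(\gamma)}}$ for every Dehn filling vector $\gamma$. The homeomorphism $h$ induces, on each cusp, an isomorphism $H_1(\partial_i M;\Integer)\to H_1(\partial_i N;\Integer)$, hence a homeomorphism of the compactified parameter spaces $\overline{H_1(\partial_i M;\Real)}\to\overline{H_1(\partial_i N;\Real)}$ fixing the $\infty$ point; write $h(\gamma^{(i)})=:\eta^{(i)}$ for the resulting filling vectors on $N$. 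Since $h$ is a homeomorphism fixing $(\infty,\ldots,\infty)$ and $\gamma^{(i)}\to(\infty,\ldots,\infty)$, we get $\eta^{(i)}\to(\infty,\ldots,\infty)$ as well.

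Now for each large $i$ we have $\widehat{\pi_1 M_{\gamma^{(i)}}}\cong\widehat{\pi_1 N_{\eta^{(i)}}}$, i.e.\ $\widehat{\pi_1 M_i}\cong\widehat{\pi_1 N_{\eta^{(i)}}}$. Since $\eta^{(i)}\to(\infty,\ldots,\infty)$, Proposition~\ref{prop: dehn surgery theorem}(1) guarantees that for $i$ large enough $N_{\eta^{(i)}}$ is a finite-volume hyperbolic 3-manifold, and of course $N_{\eta^{(i)}}\in\mathfrak{M}$. As $M_i$ is profinitely rigid in $\mathfrak{M}$ by hypothesis, the isomorphism $\widehat{\pi_1 M_i}\cong\widehat{\pi_1 N_{\eta^{(i)}}}$ forces $N_{\eta^{(i)}}\cong M_i$ for all sufficiently large $i$. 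Thus $\{M_i\}$, which converges geometrically to $M$, also arises as a sequence of Dehn fillings of $N$ along parameters $\eta^{(i)}\to(\infty,\ldots,\infty)$; by Proposition~\ref{prop:limit sequence}(1) this sequence geometrically converges to $N$. By the uniqueness of geometric limits, Lemma~\ref{lem: limit unique}, $M$ is homeomorphic to $N$, which completes the proof.

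I expect the main obstacle to be the bookkeeping in the second paragraph: one must be careful that the single homeomorphism $h:\partial M\to\partial N$ from Theorem~\ref{thm:peripheralDehnfill} simultaneously transports \emph{all} of the filling vectors $\gamma^{(i)}$ and does so continuously with respect to the topology on parameter space, so that convergence to the $\infty$-tuple is preserved; this is where the precise formulation of Theorem~\ref{thm:peripheralDehnfill} (a fixed $h$ working for every $\gamma$) is essential rather than merely a per-filling statement. A secondary subtlety is ensuring all the ``sufficiently large $i$'' conditions — hyperbolicity of $M_{\gamma^{(i)}}$, hyperbolicity of $N_{\eta^{(i)}}$, profinite rigidity applicability, and the Dehn surgery neighbourhoods — can be met simultaneously by passing to a common tail of the sequence, which is routine since each excludes only finitely many indices.
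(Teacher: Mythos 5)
Your proposal is correct and follows essentially the same route as the paper's proof: pull back the filling vectors via Proposition~\ref{prop:limit sequence}(2), use Proposition~\ref{prop:detect hyperbolic} and Theorem~\ref{thm:peripheralDehnfill} to transport the profinite isomorphism to the fillings, invoke profinite rigidity of each $M_i$ to identify $N_{h(\gamma^{(i)})}\cong M_i$, and conclude by uniqueness of geometric limits (Lemma~\ref{lem: limit unique}). The only cosmetic difference is your appeal to Proposition~\ref{prop: dehn surgery theorem}(1) for hyperbolicity of $N_{\eta^{(i)}}$, which is redundant since $N_{\eta^{(i)}}\cong M_i$ already gives this.
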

\begin{proof}
  Let \(M\) be a finite-volume hyperbolic manifold and \(\{M_{i}\}{}_{i=0}^{+\infty}\) be a sequence of finite-volume hyperbolic manifolds geometrically converging to \(M\). It suffices to show that if every $M_i$ is profinitely rigid in $\mathfrak M$, so is $M$.

  We may assume $M$ is cusped, because any closed hyperbolic 3-manifold is an isolated point in $\FVH$.
  Therefore, as stated in Proposition~\ref{prop:limit sequence}(2), up to deleting finitely many terms, we may further assume the existence of Dehn filling vectors $\{\beta^{(i)}\}$ on $M$ that converges to $(\infty,\cdots,\infty)$, such that $M_i\cong M_{\gamma^{(i)}}$ for each $i$.

    Let $N$ be a compact, orientable 3-manifold so that $\widehat{\pi_1M}\cong \widehat{\pi_1N}$. Proposition~\ref{prop:detect hyperbolic} implies that $N$ is also cusped finite-volume hyperbolic.
    Therefore, according to Theorem~\ref{thm:peripheralDehnfill}, there exists a homeomorphism $h:\partial M\rightarrow\partial N$ such that \(\widehat{\pi_{1}M_{\gamma}}\cong \widehat{\pi_{1}N_{h(\gamma)}}\) for any Dehn filling vector \(\gamma\) on $M$. In particular, $\widehat{\pi_1M_i}\cong \widehat{\pi_1N_{h(\gamma^{(i)})}} $. Since each $M_i$ is profinitely rigid in $\mathfrak{M}$, it follows that $N_{h(\gamma^{(i)})}\cong M_i$. In particular, $N_{h(\gamma^{(i)})}$ is hyperbolic.

    Since $h$ is a homeomorphism between the boundaries, the vectors $h(\gamma^{(i)})$ also converges to $(\infty,\cdots, \infty)$ as $i$ tends to $+\infty$.
    Consequently, $N$ is the geometric limit of $\{N_{h(\gamma^{(i)})}\}_{i=0}^{+\infty}$ by Proposition~\ref{prop:limit sequence}(1).

    In summary, both \(N\) and \(M\) are the geometric limits of the sequence \(\{M_i\}_{i=0}^{+\infty}\).
    It follows from the uniqueness of the geometric limit (Lemma~\ref{lem: limit unique}) that $N$ is homeomorphic to $M$.
\end{proof}

In the classical language of Dehn fillings, Theorem~\ref{thm:closedset} is restated as in the following corollary.

\begin{corollary}\label{cor: rigidity as dehn vector}
    Let $M$ be a finite-volume hyperbolic 3-manifold with $n$ cusps $\partial_1M,\cdots,\partial_nM$. Let $\{\gamma^{(i)}=(\gamma{}^{(i)}_1,\cdots,\gamma{}^{(i)}_n)\}_{i=0}^{+\infty}$ be a sequence of Dehn filling vectors of $M$, satisfying that, for any $1\le k \le n$ and $i\neq j\ge 0$, either $\gamma{}^{(i)}_k\neq \gamma{}^{(j)}_k$ or $\gamma{}^{(i)}_k= \gamma{}^{(j)}_k=\infty$.
    If each $M_{\gamma^{(i)}}$ is profinitely rigid in $\mathfrak{M}$, then \(M\) is profinitely rigid in $\mathfrak{M}$.

\end{corollary}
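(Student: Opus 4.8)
The plan is to deduce Corollary~\ref{cor: rigidity as dehn vector} directly from Theorem~\ref{thm:closedset}, by reducing the hypothesis on the Dehn filling vectors $\{\gamma^{(i)}\}$ to the statement that the manifolds $M_{\gamma^{(i)}}$ geometrically converge to $M$. Once that reduction is in place, Theorem~\ref{thm:closedset} applies verbatim: each $M_{\gamma^{(i)}}$ is profinitely rigid in $\mathfrak{M}$, these form a sequence inside the closed set of profinitely rigid finite-volume hyperbolic manifolds, and hence their limit $M$ lies in that set as well.

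First I would pass to a subsequence to arrange geometric convergence. By Proposition~\ref{prop: dehn surgery theorem}(1) there is a neighborhood $U$ of $(\infty,\dots,\infty)$ in the space of Dehn filling vectors on which all fillings are hyperbolic, and by Proposition~\ref{prop:limit sequence}(1) any sequence of Dehn filling vectors converging to $(\infty,\dots,\infty)$ produces a geometrically convergent sequence of fillings (after discarding finitely many terms). So the crux is to show that the combinatorial hypothesis of the corollary---that for each cusp index $k$ the entries $\gamma^{(i)}_k$ are pairwise distinct whenever they are not all $\infty$---forces a subsequence of $\{\gamma^{(i)}\}$ to converge to $(\infty,\dots,\infty)$ in the topology on $\overline{\Real^2}^n$ described in Section~\ref{subsec:Dehn fill notation}. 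This is the step I expect to carry the real content. The point is that for a fixed cusp $\partial_k M$, a slope is a primitive integral class $(p,q)$, and the only way infinitely many pairwise distinct primitive integral classes can fail to escape to the compactification point $\infty$ is to have a bounded subsequence; but there are only finitely many primitive integral classes of bounded norm, contradicting pairwise distinctness. Hence for each $k$, every subsequence of $\{\gamma^{(i)}_k\}$ that avoids $\infty$ tends to $\infty$, and combining this over the finitely many cusps (by a diagonal argument, or simply by noting the entries equal to $\infty$ may also be split off into a further subsequence on which a fixed set of cusps is always unfilled) yields a subsequence with $\gamma^{(i)}\to(\infty,\dots,\infty)$.

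Having produced such a subsequence $\{\gamma^{(i_j)}\}$, I would apply Proposition~\ref{prop:limit sequence}(1) to conclude that $M_{\gamma^{(i_j)}}$ geometrically converges to $M$; each of these manifolds is profinitely rigid in $\mathfrak{M}$ by hypothesis, so by Theorem~\ref{thm:closedset} the limit $M$ is profinitely rigid in $\mathfrak{M}$, which is exactly the conclusion. Note that passing to a subsequence is harmless here, since the conclusion---profinite rigidity of $M$---does not depend on the sequence; we only need the existence of \emph{one} geometrically convergent subsequence with profinitely rigid terms.

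The main obstacle, then, is purely the bookkeeping in the previous paragraph: carefully handling cusps whose parameters are permitted to repeat only at the value $\infty$, so that after restricting to a suitable subsequence one genuinely has coordinatewise convergence to the infinity point in each factor of $\overline{\Real^2}^n$. This is elementary but must be stated precisely, since the corollary's hypothesis is deliberately weaker than ``$\gamma^{(i)}\to(\infty,\dots,\infty)$''---it allows, for instance, a cusp that is filled in only finitely many of the $M_{\gamma^{(i)}}$, or two cusps whose nonconstant filling slopes interleave in complicated ways. I do not anticipate any geometric or profinite-theoretic difficulty beyond what is already packaged in Theorem~\ref{thm:closedset} and Proposition~\ref{prop:limit sequence}.
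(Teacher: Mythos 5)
Your proposal is correct and follows essentially the same route as the paper: verify that the combinatorial hypothesis forces $\gamma^{(i)}\to(\infty,\dots,\infty)$ (the paper dismisses this as ``clear''; your finitely-many-bounded-primitive-classes argument is exactly the justification, and in fact the full sequence converges so no subsequence extraction is needed), then invoke Proposition~\ref{prop:limit sequence}(1) and Theorem~\ref{thm:closedset}.
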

\begin{proof}
    It is clear that the conditions in the corollary ensure that the Dehn filling vectors $\gamma^{(i)}$ converges to $(\infty,\cdots,\infty)$ as $i\to +\infty$. Hence, up to deleting finitely many non-hyperbolic terms, $M_{\gamma^{(i)}}$ geometrically converges to $M$ via Proposition~\ref{prop:limit sequence}(1).
    Now, the conclusion follows from Theorem~\ref{thm:closedset} clearly.
\end{proof}

\subsection{Reduce cusped cases to closed cases}

As an application of Theorem~\ref{thm:closedset}, we reduce the profinite rigidity of hyperbolic 3-manifolds to the closed case.

\begin{corollary}\label{cor:reducedcase}
Any of the following conditions is sufficient to establish the profinite rigidity of all cusped finite-volume hyperbolic manifolds in $\mathfrak{M}$.
\begin{enumerate}
    \item All closed hyperbolic 3-manifolds are profinitely rigid in $\mathfrak{M}$.
    \item All closed, non-arithmetic hyperbolic 3-manifolds are profinitely rigid in $\mathfrak{M}$.
    \item All closed, fibered hyperbolic 3-manifolds are profinitely rigid in $\mathfrak{M}$.
\end{enumerate}

\end{corollary}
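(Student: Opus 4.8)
The plan is to deduce each case as a corollary of Theorem~\ref{thm:closedset}, exhibiting, for an arbitrary cusped finite-volume hyperbolic 3-manifold $M$, a sequence of Dehn fillings of $M$ that are already known to be profinitely rigid and that converge geometrically to $M$. The bulk of the work is packaging known results about the prevalence of closed (resp.\ non-arithmetic, resp.\ fibered) hyperbolic Dehn fillings into a form compatible with Corollary~\ref{cor: rigidity as dehn vector}. Throughout, fix the notation $\partial M=\partial_1M\sqcup\cdots\sqcup\partial_nM$ and recall that, by Proposition~\ref{prop: dehn surgery theorem}(1), all sufficiently long Dehn fillings of $M$ are closed hyperbolic 3-manifolds; filling every cusp with a long enough parameter produces \emph{closed} hyperbolic manifolds, so (1) is the cleanest case.

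For case (1): choose, for each $i$, a Dehn filling vector $\gamma^{(i)}=(\gamma^{(i)}_1,\dots,\gamma^{(i)}_n)$ with every $\gamma^{(i)}_k$ a genuine (non-$\infty$) slope, chosen long enough that $M_{\gamma^{(i)}}$ is closed hyperbolic (possible by Proposition~\ref{prop: dehn surgery theorem}(1)), and chosen so that the slopes satisfy the separation hypothesis of Corollary~\ref{cor: rigidity as dehn vector}, i.e.\ $\gamma^{(i)}_k\neq\gamma^{(j)}_k$ for $i\neq j$ on every cusp $k$ --- this is easy since on each torus there are infinitely many arbitrarily long slopes. By hypothesis each $M_{\gamma^{(i)}}$, being a closed hyperbolic 3-manifold, is profinitely rigid in $\mathfrak M$, so Corollary~\ref{cor: rigidity as dehn vector} applies and $M$ is profinitely rigid in $\mathfrak M$. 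Case (2) is identical once we know we may in addition arrange $M_{\gamma^{(i)}}$ to be non-arithmetic: this is where I would invoke the fact that only finitely many Dehn fillings of a given cusped hyperbolic $M$ can be arithmetic (a consequence of Borel's finiteness of arithmetic manifolds of bounded volume together with the strict volume decrease under filling, or of the fact that the set of arithmetic lattices is discrete in the geometric topology while the fillings accumulate at $M$); discarding that finite set leaves infinitely many non-arithmetic long fillings on each cusp, so the argument of case (1) goes through verbatim.

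Case (3) is the substantive one and is the step I expect to be the main obstacle, because one must produce \emph{fibered} closed hyperbolic fillings that simultaneously converge to $M$. Here the natural tool is Thurston's theory combined with the result that if $M$ itself fibers over $S^1$ then all sufficiently long fillings along slopes intersecting the fiber exactly once are fibered; more robustly, I would appeal to the agol--Dunfield-type statement (used in the remark after Theorem~\ref{thm:closedset}, citing \cite{Le21a}) that a cusped hyperbolic manifold admits infinitely many fibered hyperbolic Dehn fillings whenever it is itself ``virtually monotonic,'' and then pass to a finite cover: a cusped hyperbolic $M$ has a finite cover $\widetilde M$ that fibers over $S^1$ (Agol, Wise), and $\widetilde M$ then has infinitely many long fibered hyperbolic fillings converging to $\widetilde M$. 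The technical difficulty is that profinite rigidity does not obviously descend along finite covers, so one cannot simply conclude rigidity of $M$ from rigidity of $\widetilde M$; instead one should choose the fibered fillings of $M$ directly. I would therefore phrase case (3) as: there exist infinitely many Dehn filling vectors $\gamma^{(i)}\to(\infty,\dots,\infty)$ satisfying the Corollary~\ref{cor: rigidity as dehn vector} separation condition with each $M_{\gamma^{(i)}}$ closed, hyperbolic, \emph{and fibered}, the existence being extracted from the cited fiberedness results for long fillings; granting that, the hypothesis of (3) makes each $M_{\gamma^{(i)}}$ profinitely rigid, and Corollary~\ref{cor: rigidity as dehn vector} finishes the proof exactly as before.
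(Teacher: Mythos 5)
Your cases (1) and (2) follow the paper's argument essentially verbatim: long fillings are closed hyperbolic and converge geometrically to $M$, and only finitely many of them can be arithmetic, so Theorem~\ref{thm:closedset} (via Corollary~\ref{cor: rigidity as dehn vector}) applies. Those parts are fine.

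Case (3), however, has a genuine gap, and you have correctly located but not resolved it. Your plan requires that an \emph{arbitrary} cusped finite-volume hyperbolic $M$ admit infinitely many closed, hyperbolic, \emph{fibered} Dehn fillings converging to $M$. No such statement is available: a general cusped hyperbolic $3$-manifold need not fiber, need not have a fibered face of its Thurston norm ball, and there is no theorem producing fibered long fillings of it. The result of \cite{Le21a} that you invoke runs in the opposite direction --- it says that \emph{if} a long filling happens to be fibered, then $M$ has a special (monotonic drilling) structure; it does not supply fibered fillings. Passing to a fibered finite cover $\widetilde M$ does not help for exactly the reason you name: profinite rigidity of $\widetilde M$ does not formally descend to $M$. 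So the sentence ``granting that, \dots'' is carrying the entire weight of case (3) and cannot be granted.

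The paper avoids this entirely by proving the implication $(3)\Rightarrow(2)$ \emph{among closed manifolds}, and then falling back on case (2). Concretely: for a closed non-arithmetic hyperbolic $M$, \cite[Theorem 4.1]{ACM24a} together with the virtual fibering theorem yields a fibered finite regular cover $M^\star$ such that $\pi_1M$ is the normalizer of $\pi_1M^\star$ in $\mathrm{PSL}(2,\mathbb{C})$. A profinite isomorphism $\widehat{\pi_1M}\cong\widehat{\pi_1N}$ induces a profinite isomorphism between corresponding finite regular covers of the same degree, so condition (3) forces $N^\star\cong M^\star$; Mostow rigidity and the normalizer property then identify $M$ as the unique manifold regularly covered by $M^\star$ with that (maximal) degree, whence $N\cong M$. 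The key point is that the covering-descent obstruction you worried about is circumvented by choosing a \emph{characteristic} fibered cover (one whose base is recoverable as a normalizer quotient), and this is done for the closed fillings, never for the cusped manifold $M$ itself. You would need to import this (or an equivalent) argument to complete case (3).
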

\begin{proof}
Proposition~\ref{prop:limit sequence}(1) implies that any cusped hyperbolic 3-manifold $M$ is the geometric limit of a sequence of closed hyperbolic Dehn fillings of $M$. Thus, the sufficiency of condition (1) follows from Theorem~\ref{thm:closedset} clearly.

Moreover, there are only finitely many arithmetic terms in this sequence according to \cite[Corollary 11.2.2]{MR13}, that is, $M$ is the geometric limit of a sequence of non-arithmetic closed hyperbolic 3-manifolds, which leads to the sufficiency of condition (2).

To complete the proof, it suffices to show that condition (3) guarantees condition (2). In fact, this is based on a technique recently developed in~\cite{ACM24a}, with a slight modification.

For any closed, non-arithmetic hyperbolic 3-manifold $M$,  \cite[Theorem 4.1]{ACM24a} together with the virtual fibering theorem \cite[Theorem 9.2]{Ago13}  implies that there exists a fibered finite cover ${M}^\star$ of $M$, such that $\pi_1M$ is isomorphic to the normalizer of $\pi_1{M}^\star$ in $\mathrm{PSL}(2,\mathbb{C})$.  Let $N$ be a compact, orientable 3-manifold such that $\widehat{\pi_1M}\cong \widehat{\pi_1N}$. Then $N$ is closed hyperbolic according to Proposition~\ref{prop:detect hyperbolic}. In addition, there exists a finite regular covering ${N}^\star$ of $N$, with $[N^\star:N]=[{M}^\star:M]$ such that $\widehat{\pi_1N^\star}\cong \widehat{\pi_1M^\star}$.
Since $M^\star$ is closed and fibered, it follows that $N^\star$ is homeomorphic to $M^\star$ by condition (3).
On the other hand, according to the Mostow's rigidity theorem, $M$ is the unique hyperbolic 3-manifold regular covered by ${M}^\star$ with the maximal degree of covering.
However, $N^\star\cong M^\star$ and $N^\star\to N$ is a regular cover of the same degree.
Therefore, $N\cong M$, which implies condition (2).
\end{proof}

\section{The bubbling trick}\label{sec:fiberedness}

In this section, we show the fiberedness of infinitely many Dehn fillings of a bubble-drilled manifold, thereby deducing Theorem~\ref{thm:secondmain} from Theorem~\ref{thm:closedset}.

\begin{definition}[bubbled-drilled manifold]
    Let \(M\) be a surface bundle over \(S^1\) with a fixed fiber structure \(\Sigma\to M \to S^1\).
    A \textit{bubble} is an essential simple closed curve on a fiber surface, and a \textit{bubbled-drilled manifold} is defined as a hyperbolic surface bundle removing neighborhoods of finitely many disjoint bubbles on distinct fiber surfaces, usually denoted by \(E\).
\end{definition}

With a slight abuse of notation, the boundary of a bubble-drilled manifold produced by drilling is also called a \textit{bubble}.
Each bubble can be equipped with a canonical meridian-longitude system $(m_i,l_i)$.
To be precise, the {\em distinguished longitude} $l_i$ around a bubble is the normal direction of the bubble in the fiber surface it occupies.
\begin{figure}[t]
\centering
\includegraphics[scale=0.8]{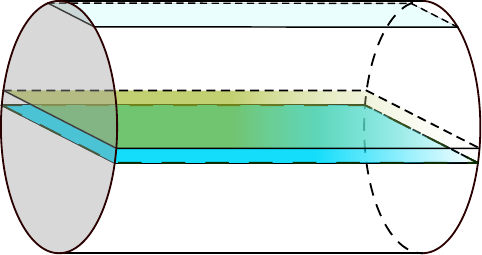}
\caption{Annuli foliation of a solid torus after removing highest and lowest two small arches.}\label{pic:fiberedtorus}
\end{figure}

\begin{proposition}\label{prop:fiberness}
Let $E=M\setminus \mathring N(\beta_1\cup \ldots \cup \beta_n)$ be a bubble-drilled manifold. Then for any integers $k_1,\cdots, k_n$, the Dehn filling of $E$ along the Dehn filling vector $(m_1+k_1l_1,\cdots ,m_n+k_nl_n)$ is also fibered, with the same fiber type as $M$.

\end{proposition}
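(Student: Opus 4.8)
The plan is to reconstruct the fibration after filling by producing a monotonic (level-or-transverse) foliation that survives the Dehn surgery, and the key is to understand the specific filling slopes $m_i + k_i l_i$ in terms of the fiber surface. Recall that each bubble $\beta_i$ lies on a fiber surface $\Sigma_i \cong \Sigma \times \{t_i\}$, and the distinguished longitude $l_i$ is, by definition, the curve on $\partial N(\beta_i)$ parallel to $\beta_i$ inside $\Sigma_i$ — equivalently, $\Sigma_i \cap \partial N(\beta_i)$ consists of two parallel copies of $l_i$ (the two normal directions of $\beta_i$ in $\Sigma_i$). The first step is to analyze a neighborhood of a single bubble. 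A tubular neighborhood $N(\beta_i) \cong S^1 \times D^2$ meets the fibration of $M$ in a product way near $\Sigma_i$: the fibers slice $N(\beta_i)$ into a one-parameter family of bigons/annuli, and after drilling, $\partial N(\beta_i)$ acquires the induced foliation by curves. The point (illustrated in Figure~\ref{pic:fiberedtorus}) is that $\partial N(\beta_i)$ is foliated by the fibration of $E$ into: two disjoint longitudes $l_i$ (where $\Sigma_i$ meets the torus) together with, on each side, a foliation by circles parallel to $m_i$, possibly after removing small "arch" pieces, so that the closed-up boundary foliation consists of circles in the class $m_i + k l_i$ for suitable integer offsets depending on how many times the nearby fibers wrap — and here $k = 0$ in the unfilled $E$.

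Second, I would carry out the filling. Attach a solid torus $V_i = S^1 \times D^2$ to $\partial_i E$ so that its meridian is glued to $m_i + k_i l_i$. The classical model is the one in Figure~\ref{pic:fiberedtorus}: a solid torus whose boundary torus is foliated by curves of slope $m_i + k_i l_i$ (together with finitely many arch-exceptions matching $l_i$) extends to a monotonic foliation of $V_i$ — concretely, foliate $V_i$ by meridian disks transverse to the core, then modify near the boundary so that the disks glue to the annulus/longitude leaves of the $E$-fibration. Because the filling slope differs from the longitude $l_i$ (it has nonzero $m_i$-component), the meridian disk of $V_i$ is not isotopic into a fiber, and the resulting foliation of $E_{(m_i+k_il_i)}$ is again a fibration over $S^1$; crucially the generic fiber is obtained from $\Sigma$ by re-gluing along the two longitude curves that bounded the drilled annulus, i.e. it is homeomorphic to the original $\Sigma$ — the surgery undoes the drilling at the level of the fiber surface. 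Doing this simultaneously and independently at all $n$ bubbles (they lie on distinct fibers, hence have disjoint neighborhoods that each interact with the fibration locally) yields a fibration of $E_{(m_1+k_1l_1,\dots,m_n+k_nl_n)}$ with fiber $\Sigma$, which is the claim.

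The cleanest way to package the above is probably homological plus Stallings: one checks that the fibered class $\phi \in H^1(M;\Integer)$ restricts to a class $\phi_E$ on $E$ that pairs with each meridian $m_i$ as $\pm 1$ and with each longitude $l_i$ as $0$ (since $l_i$ is homologous in $\partial N(\beta_i)$ to $\beta_i \subset \Sigma_i$, which is null-homologous in the fiber direction), hence pairs with $m_i + k_i l_i$ as $\pm 1 \neq 0$; therefore $\phi_E$ extends over every such filling, and one verifies the extended class is still fibered either by a direct sutured-manifold/Thurston-norm argument or by explicitly exhibiting the fiber via the solid-torus model above. I expect the main obstacle to be the last point — namely, verifying that the extended cohomology class on the filled manifold is genuinely represented by a fibration (Stallings' criterion needs the kernel of $\phi_E$ on $\pi_1$ to be finitely generated, or one needs the Thurston norm to be controlled under the filling), rather than merely exhibiting a nonzero class; this is where the explicit annular foliation of the filling solid torus in Figure~\ref{pic:fiberedtorus}, which matches the two longitudinal leaves coming from $\Sigma_i \cap \partial N(\beta_i)$, does the real work of certifying that no new essential sphere or compressing disk is introduced and that the fiber surface closes up correctly to a copy of $\Sigma$.
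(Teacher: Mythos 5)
Your proposal is correct and follows essentially the same route as the paper: inherit the fibration of $M$ on $E$ away from the bubbles and extend it across each filling solid torus by the annular foliation of Figure~\ref{pic:fiberedtorus}, whose leaves close up the two longitudinal boundary circles $\Sigma_i\cap\partial N(\beta_i)$ into a copy of $\Sigma$. One small precision: the reason the annular foliation extends is not merely that the slope ``differs from $l_i$'' but that $m_i+k_il_i$ meets $l_i$ exactly once, so $l_i$ is a longitude of the attached solid torus; the auxiliary Stallings/Thurston-norm packaging in your last paragraph is unnecessary once this explicit model is in place.
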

\begin{proof}
We inherit the fiber structure of $M$ onto $E$, except for the fiber layers that intersect a neighborhood of the bubbles. We may assume that, when encountering each bubble, the ``highest'' and ``lowest'' fiber surfaces intersect with $\partial N(\beta_i)$ as two annuli, with their boundary curves exactly represented by the distinguished longitude $l_i$.

Since the Dehn filling parameter at the bubble is $m_i+k_i l_i$, the distinguished longitude is indeed a longitude in the attached solid torus. Thus, the fiber surfaces can pass through each attached solid torus via an annular foliation of the solid torus shown in Figure~\ref{pic:fiberedtorus}. This yields a fiber structure of the Dehn-filling result with the same fiber type as $M$.
\end{proof}

\begin{example}
    A typical illustration of Proposition~\ref{prop:fiberness} is that integral Dehn surgeries on one component of the Whitehead link are also fibered, with fiber type being a once-punctured torus (see~\cite[Proposition 3]{HMW92}).
\end{example}

\begin{remark}\label{rem:monodromy}
A \(m+pl\) Dehn filling on a bubble \(\alpha\) changes the monodromy of the fiber bundle from \(\varphi\) to \(\varphi \circ D_{\alpha}^{\pm p}\), where \(D_{\alpha}\) denotes the Dehn twist in the fiber \(\Sigma\) along \(\alpha\) and the sign is determined by the orientation of the bubbles' meridian and longitude.
\end{remark}

\begin{theorem}\label{thm:secondmain}
  Let $M$ be a compact, orientable 3-manifold that fibers over $S^1$ with fiber $\Sigma$ being a compact surface and $E=M\setminus \mathring N(\beta_1\cup \cdots \cup \beta_n) $ be a bubble-drilled manifold.
If
\begin{itemize}%[(1)]
\item[(HC)] the interior of \(E\) admits a complete finite-volume hyperbolic structure, and
\item[(RC)] all fibered hyperbolic 3-manifolds with fiber  \(\Sigma\) are profinitely rigid in $\mathfrak{M}$,
\end{itemize}
then $E$ is profinitely rigid in $\mathfrak{M}$.

\end{theorem}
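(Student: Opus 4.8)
The strategy is to combine Proposition~\ref{prop:fiberness} with Corollary~\ref{cor: rigidity as dehn vector}, using the hyperbolic Dehn surgery theorem to guarantee hyperbolicity of the relevant fillings. Concretely, assume \HC\ and \RC. First I would invoke Proposition~\ref{prop:fiberness}: for every integer vector $(k_1,\dots,k_n)$, the filling $E_{(m_1+k_1l_1,\dots,m_n+k_nl_n)}$ is a surface bundle over $S^1$ with fiber $\Sigma$. By Remark~\ref{rem:monodromy}, its monodromy is the original monodromy $\varphi$ composed with $D_{\beta_1}^{\pm k_1}\circ\cdots\circ D_{\beta_n}^{\pm k_n}$, so these fillings realize infinitely many distinct fibered manifolds (though we will not actually need distinctness of the total spaces — only the constraint on the Dehn filling parameters matters).

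Second, I would extract an infinite subfamily of these fillings that are \emph{hyperbolic}. By \HC, $E$ itself is a cusped finite-volume hyperbolic $3$-manifold, so Proposition~\ref{prop: dehn surgery theorem}(1) provides a neighborhood $U$ of $(\infty,\dots,\infty)$ in the space of Dehn filling vectors inside which all fillings are hyperbolic. The key observation is that the family of vectors $\{(m_1+k_1l_1,\dots,m_n+k_nl_n) : (k_1,\dots,k_n)\in\Integer^n\}$ accumulates at $(\infty,\dots,\infty)$: fixing all but one coordinate and letting that $k_j\to\pm\infty$ sends the $j$-th parameter to $\infty$. Hence one can choose a sequence of integer vectors $(k_1^{(i)},\dots,k_n^{(i)})$ with $\max_j |k_j^{(i)}|\to\infty$ in a way that the corresponding Dehn filling vectors $\gamma^{(i)}$ lie in $U$ and satisfy the separation hypothesis of Corollary~\ref{cor: rigidity as dehn vector}, namely that in each cusp the parameters $\gamma^{(i)}_j$ are pairwise distinct or equal to $\infty$. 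For instance one may take $k_j^{(i)} = i$ for all $j$ once $i$ is large enough to land in $U$; then in the $j$-th coordinate the parameters $m_j + i\,l_j$ are pairwise distinct primitive classes, and the whole sequence converges to $(\infty,\dots,\infty)$.

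Third, each $M_i := E_{\gamma^{(i)}}$ is, by the two preceding steps, simultaneously a fibered hyperbolic $3$-manifold with fiber $\Sigma$; therefore \RC\ applies and each $M_i$ is profinitely rigid in $\mathfrak{M}$. Finally, $E$ is a cusped finite-volume hyperbolic manifold all of whose fillings along the separated sequence $\gamma^{(i)}$ are profinitely rigid in $\mathfrak{M}$, so Corollary~\ref{cor: rigidity as dehn vector} (equivalently Theorem~\ref{thm:closedset} via Proposition~\ref{prop:limit sequence}(1)) yields that $E$ itself is profinitely rigid in $\mathfrak{M}$.

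The only genuinely delicate point is the bookkeeping in the second step: one must verify that the meridian–longitude framing $(m_i,l_i)$ chosen in Proposition~\ref{prop:fiberness} is such that, under the parameterization of $H_1(\partial_i E;\Integer)$, the classes $m_i + k_i l_i$ genuinely tend to the point at infinity of $\overline{\Real^2}$ as $k_i\to\infty$ — this is immediate since $l_i$ is a nonzero primitive class and $m_i,l_i$ are linearly independent, so $|m_i+k_il_i|\to\infty$ — and that finitely many bad values of $k_i$ (where the filling might fail to be hyperbolic or the fibration might degenerate) can be discarded without destroying the separation property. All of this is routine; no part of the argument requires the full strength of Theorem~\ref{thm:closedset} beyond its Dehn-filling reformulation.
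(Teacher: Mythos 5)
Your proposal is correct and follows essentially the same route as the paper: use Proposition~\ref{prop:fiberness} to produce fibered fillings of $E$ with fiber $\Sigma$, Thurston's hyperbolic Dehn surgery theorem to guarantee that sufficiently long ones among them are hyperbolic, condition (RC) to conclude these are profinitely rigid, and Corollary~\ref{cor: rigidity as dehn vector} to pass the rigidity back to $E$. Your additional bookkeeping (choosing $k_j^{(i)}=i$ so the filling vectors are pairwise distinct in each coordinate and converge to $(\infty,\ldots,\infty)$) is exactly the detail the paper leaves implicit.
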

\begin{proof}
  It follows from Proposition~\ref{prop: dehn surgery theorem} that sufficiently long Dehn fillings of \(E\) are hyperbolic. In particular, some of them are hyperbolic fiber bundles over circle with the same fiber type \(\Sigma\) via Proposition~\ref{prop:fiberness}, whose Dehn filling vectors converge to \((\infty,\ldots,\infty)\). These Dehn fillings are profinitely rigid in $\mathfrak M$ according to   condition (RC), so $E$ is profinitely rigid as well via Corollary~\ref{cor: rigidity as dehn vector}.
\end{proof}

\section{The hyperbolicity criterion}\label{sec:hyperbolicity}
In this section, we prove the hyperbolicity of drilled manifolds with \(n\) bubbles under   easily checkable conditions.

Suppose that \(M=\Sigma\times I/\sim\), where \((x,1)\sim(\varphi(x),0)\), is a hyperbolic surface bundle over \(S^{1}\) with possibly non-empty toral boundaries. It is clear that the monodromy \(\varphi\) is pseudo-Anosov by Thurston's geometrization theorem and there is a canonical infinite cyclic covering \(\pi_{\Integer}:\Sigma\times \Real\to M\) with \(\Integer\)-action \(\widetilde{\varphi}(x,t)=(\varphi(x),t-1)\).

We compact our condition as follows.

\begin{definition}[upright annulus]\label{def:upright}
    An annulus in \(\Sigma\times \Real\) is called \textit{upright} if its interior is transverse to each level \(\Sigma\times \{t\}\) and its two circle boundaries belong to different levels.
\end{definition}
\begin{definition}[flow-acoannular, finite sequence version]\label{def:flow-acoannular 1}
  Let \(\{\alpha_{i}\}_{1\leq i\leq n}\) be a finite sequence \(\mathcal{A}_{n}\) of essential closed curves on \(\Sigma\), some of which are possibly  isotopic.
  This finite sequence can be extended to an infinite one \(\mathcal{A}_{\Integer\varphi}\) by setting \(\alpha_{i+kn}=\varphi^{-k}(\alpha_{i}), k\in \Integer\).

  The finite sequence \(\mathcal{A}_{n}\) is said to be \textit{flow-acoannular} if for any two terms \(\alpha_{k}\) and \(\alpha_{l}\), \(k\neq l\), in the extended sequence \(\mathcal{A}_{\Integer\varphi}\) that are isotopic,  there exists \(k<h<l\), which satisfies that \(\alpha_{h}\) has positive geometric intersection number with \(\alpha_{k}\).
\end{definition}
\begin{definition}[flow-acoannular, bubble version]\label{def:flow-acoannular 2}
  Let \(\beta_{i}=\alpha_{i}\times \left\{ t_{i} \right\}\) be $n$ bubbles in \(M\), where \(0<t_{1}<t_{2}< \cdots <t_{n}<1\) are \(n\) real numbers that describe the heights of the bubbles. They are called \textit{flow-acoannular} if their projections \(\{\alpha_{i}\}_{1\leq i\leq n}\) onto \(\Sigma\) form a flow-acoannular finite sequence as above.

Equivalently, let \(\{\widetilde{\beta}_k\}_{k\in\Integer}\) denote all lifts of bubbles \(\beta_i\) in the canonical infinite cyclic covering \(\Sigma\times \Real\).
The \(n\) bubbles are called \textit{flow-acoannular} if any two lifts \(\widetilde{\beta}_{k_1}\) and \(\widetilde{\beta}_{k_2}\) do not cobound an upright annulus disjoint from other lifts.
\end{definition}

\begin{theorem}\label{thm:hyperbolicity}
If \(M\) is hyperbolic and the \(n\) bubbles \(\{\beta_i\}_{i=1}^n\) are flow-acoannular, then the bubble-drilled manifold \(E\) is hyperbolic.
\end{theorem}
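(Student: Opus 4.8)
The plan is to use the standard Thurston-style trichotomy: the interior of $E$ is finite-volume hyperbolic if and only if it is irreducible, atoroidal, acylindrical-up-to-the-boundary-tori, and not Seifert-fibered. Since $E$ is obtained from the hyperbolic bundle $M$ by drilling out the essential curves $\beta_i$, irreducibility and the absence of $S^2$ boundary components are immediate (drilling curves from an irreducible manifold with incompressible boundary keeps it irreducible, and the $\beta_i$ are essential so the new torus boundaries are incompressible). The non-Seifert-fibered and non-closed conditions are clear. So the heart of the matter is to rule out essential tori and essential annuli in $E$. I would first set up the infinite cyclic cover $\overline{E}\subset \Sigma\times\Real$ obtained by deleting the lifts $\{\widetilde\beta_k\}$, noting that the $\Integer$-action and the fibration of $M$ descend to give $E$ a fibered structure over $S^1$ on the complement of the drilling tori (this is essentially Proposition~\ref{prop:fiberness} with no filling).

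The key geometric input is that $M$ is hyperbolic, hence atoroidal and with no essential annuli. Given an essential torus or essential annulus $A$ in $E$, I would push it off the drilling tori $\partial N(\beta_i)$ and view it as a (possibly punctured) essential surface $\widehat A$ in $M$, where the punctures are curves on the $\partial N(\beta_i)$; such a surface meeting $\partial N(\beta_i)$ only in meridians would cap off to an essential torus/annulus in $M$, a contradiction, so $\widehat A$ must meet some $\partial N(\beta_i)$ in a non-meridional slope. Here is where the fibration and the pseudo-Anosov monodromy enter. Lift $\widehat A$ to the cyclic cover: a torus lifts to an infinite-area surface or (if vertically periodic) forces a $\Integer^2$ subgroup in $\pi_1(\Sigma\times\Real)=\pi_1\Sigma$, impossible; an annulus lifts to a plane or an infinite strip. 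The upshot I want is that after the lift, the essential annulus gives rise to an \emph{upright annulus} in $\Sigma\times\Real$ — one transverse to the levels with boundary on two lifted bubbles of the same isotopy type — disjoint from all other lifts. A horizontal (level) annulus cannot occur because its core would be a bubble isotopic to some $\widetilde\beta_k$ in its own level, which the flow-acoannular condition forbids once we track consecutive intervening lifts; a mixed annulus is put into upright position by an innermost/outermost-disk argument against the levels $\Sigma\times\{t\}$, using irreducibility and incompressibility of the $\widetilde\beta_k$. Then Definition~\ref{def:flow-acoannular 2} directly contradicts the existence of such an upright annulus, and Definition~\ref{def:flow-acoannular 1} is the combinatorial translation: between the two isotopic endpoints $\alpha_k,\alpha_l$ there is an intervening $\alpha_h$ with positive geometric intersection with $\alpha_k$, which an upright annulus disjoint from $\widetilde\beta_h$ cannot survive, because flowing $\alpha_k$ across the level of $\alpha_h$ must cross it.

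The main obstacle I anticipate is the reduction from an arbitrary essential annulus in $E$ to an upright annulus in the cover: one must carefully handle how the annulus meets the drilling tori (replacing boundary-parallel or meridional intersection curves, bounding the number of intersection components), rule out the case where the annulus is vertically periodic under the $\Integer$-action (which would produce a closed essential annulus or torus in $M$ itself), and then perform the isotopy that makes it transverse to the levels — all while preserving disjointness from the other lifts $\widetilde\beta_k$. A secondary subtlety is the essential-torus case: an essential torus in $E$ that is not boundary-parallel must, after capping, either be essential in $M$ (contradiction) or be compressible in $M$ but bound a region containing exactly one $\beta_i$, in which case it is boundary-parallel in $E$; making this dichotomy airtight requires the irreducibility and atoroidality of $M$ together with the fact that each $\beta_i$ lies on a fiber and hence is not contained in any ball. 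Once these are dispatched, Thurston's hyperbolization theorem for Haken manifolds (here, for manifolds with torus boundary that are atoroidal and anannular) gives that the interior of $E$ admits a complete finite-volume hyperbolic metric.
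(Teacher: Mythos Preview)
Your overall plan---verify irreducibility, non-Seifert, boundary-incompressibility, and atoroidality, then invoke Thurston---matches the paper. But there is a genuine gap in how you handle the essential-torus case, and this is precisely where the flow-acoannular hypothesis does its work.

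You write that an essential torus $T$ in $E$, once seen to be compressible in $M$, must ``bound a region containing exactly one $\beta_i$, in which case it is boundary-parallel in $E$.'' This dichotomy is false: after compressing $T$ in $M$ and invoking irreducibility, the solid region $U$ that $T$ bounds can contain \emph{several} bubbles. Indeed, if two bubbles $\beta_i,\beta_j$ happen to cobound a fiber-transverse annulus in $M$, a regular neighbourhood of that annulus has torus boundary which is incompressible in $E$ and not boundary-parallel---this is exactly the configuration that flow-acoannularity is designed to forbid. So the multi-bubble case is not a side issue; it is the entire content of the theorem, and your proposal does not address it. Relatedly, your claim that a torus lifting to the cyclic cover ``forces a $\Integer^2$ subgroup in $\pi_1(\Sigma\times\Real)=\pi_1\Sigma$'' is incorrect: $T$ is incompressible in $E$ but compressible in $M$, so $\pi_1T\to\pi_1(\Sigma\times\Real)$ has nontrivial kernel and no contradiction arises that way.

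The paper's route is quite different from yours. Rather than working in the cyclic cover $\Sigma\times\Real$, it lifts $T$ to the universal cover $\widetilde M\cong\Hyperb^3$ and exploits the ideal boundary $S^2_\infty$. When the lift $\widetilde T$ is an annulus, it bounds a region $\widetilde U$ with two ends on $S^2_\infty$; any bubble lift trapped in $\widetilde U$ is a quasi-geodesic with the same pair of endpoints at infinity. If two bubble lifts lie in $\widetilde U$, Lemmas~\ref{lem:lifts no common ends} and~\ref{lem:parallel} force their images in $\Sigma\times\Real$ to cobound an upright annulus $H$; flow-acoannularity then supplies an intervening bubble lift $\varrho$ hitting $H$ transversely once, and a linking argument in $\overline{\Hyperb^3}$ (the loop $\mathrm{cls}(\widetilde\beta_1\cup\widetilde\beta_2)$ is null-homotopic in the contractible $\mathrm{cls}(\widetilde U)$ yet links $\mathrm{cls}(\widetilde\varrho)$) yields the contradiction. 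The passage through $S^2_\infty$ is what lets the paper extract the upright annulus from the torus---your cyclic-cover approach never produces this bridge. Finally, note that the paper does not treat essential annuli separately: for a manifold with only toral boundary, atoroidal plus non-Seifert already excludes them, so that part of your outline is unnecessary.
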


Combining the Hyperbolic Dehn filling theory and the implication on the monodromy (see Remark~\ref{rem:monodromy}), Theorem~\ref{thm:hyperbolicity} implies the following corollary.

\begin{corollary}
Let \(\varphi\in \MCG(\Sigma)\) be a pseudo-Anosov mapping class and \(\{\alpha_{i}\}_{1\leq i\leq n}\) be a finite sequence of essential simple closed curves, some of which are possibly isotopic. If \(\{\alpha_{i}\}_{1\leq i\leq n}\) forms a flow-acoannular finite sequence, then there exists a family of positive bounds \(K_{i}\), such that for any parameters \(|k_{i}|>K_{i}\), \(\varphi\circ D_{\alpha_{1}}^{k_{i}}\circ \cdots \circ D_{\alpha_{n}}^{k_{n}}\) is also a pseudo-Anosov mapping class.
\end{corollary}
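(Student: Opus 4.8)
The plan is to reverse the bubbling construction of this section: to $\varphi$ and the sequence $\{\alpha_i\}$ we attach a hyperbolic bubble-drilled manifold, and then recover the twisted mapping classes $\varphi\circ D_{\alpha_1}^{k_1}\circ\cdots\circ D_{\alpha_n}^{k_n}$ as monodromies of its long Dehn fillings, where hyperbolicity forces the monodromy to be pseudo-Anosov.

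First I would form the mapping torus $M=\Sigma\times I/{\sim}$, where $(x,1)\sim(\varphi(x),0)$. Since $\varphi$ is pseudo-Anosov, $M$ is hyperbolic by Thurston's hyperbolization of fibered 3-manifolds, and it carries the canonical infinite cyclic cover $\Sigma\times\Real\to M$ used in this section. Choose heights $0<t_1<\cdots<t_n<1$ and set $\beta_i=\alpha_i\times\{t_i\}$; since the $\beta_i$ lie on distinct fiber surfaces and each $\alpha_i$ is essential and simple, the $\beta_i$ form a legitimate family of bubbles whose projections to $\Sigma$ recover the given ordered sequence $\{\alpha_i\}_{1\le i\le n}$. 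By Definition~\ref{def:flow-acoannular 2}, the hypothesis that $\{\alpha_i\}_{1\le i\le n}$ is a flow-acoannular finite sequence is exactly the statement that the bubbles $\beta_i$ are flow-acoannular. Hence Theorem~\ref{thm:hyperbolicity} applies and the bubble-drilled manifold $E=M\setminus\mathring N(\beta_1\cup\cdots\cup\beta_n)$ is hyperbolic.

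Next I would feed $E$ into hyperbolic Dehn surgery. By Proposition~\ref{prop: dehn surgery theorem}(1) there is a neighborhood $U$ of $(\infty,\ldots,\infty)$ in the space of Dehn filling vectors on $E$ such that every filling $E_\gamma$, $\gamma\in U$, is finite-volume hyperbolic; since this space carries the product topology, $U$ contains a product neighborhood $U_1\times\cdots\times U_n$, and the slope $m_i+k_il_i$ (a primitive integral class whose $l_i$-coefficient is $k_i$) lands in $U_i$ once $|k_i|$ exceeds some bound $K_i>0$. So whenever $|k_i|>K_i$ for all $i$, the filled manifold $E_{(m_1+k_1l_1,\ldots,m_n+k_nl_n)}$ is hyperbolic, while by Proposition~\ref{prop:fiberness} it fibers over $S^1$ with fiber $\Sigma$. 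A finite-volume hyperbolic surface bundle over $S^1$ has pseudo-Anosov monodromy (Thurston again), and by Remark~\ref{rem:monodromy} that monodromy is $\varphi$ post-composed, in the order prescribed by $t_1<\cdots<t_n$, with the Dehn twists $D_{\alpha_i}^{\varepsilon_i k_i}$, where each $\varepsilon_i=\pm1$ is fixed by the chosen orientations of the $(m_i,l_i)$ systems. Since $k_i\mapsto\varepsilon_i k_i$ is a bijection of $\{k_i\in\Integer:|k_i|>K_i\}$, this shows that $\varphi\circ D_{\alpha_1}^{k_1}\circ\cdots\circ D_{\alpha_n}^{k_n}$ is pseudo-Anosov for all such parameters, and the $K_i$ in the statement are precisely the bounds produced above.

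The step I expect to demand the most care is the last identification of the refilling monodromy with the precise mapping class in the statement. One must simultaneously track the sign of each Dehn twist, which depends on the orientation convention for the $(m_i,l_i)$ pair on the $i$-th bubble as flagged in Remark~\ref{rem:monodromy}, and the order in which the twists are composed, which is governed by the heights $t_i$ and by the direction in which the fiber surface flows around $S^1$. Being pseudo-Anosov is invariant under conjugation and inversion, and the ordering of the $t_i$ is ours to choose, so these conventions can always be arranged to produce the displayed normal form $\varphi\circ D_{\alpha_1}^{k_1}\circ\cdots\circ D_{\alpha_n}^{k_n}$; but it is worth carrying this matching out explicitly rather than leaving it implicit. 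Every other ingredient is a direct citation: hyperbolicity of $E$ from Theorem~\ref{thm:hyperbolicity}, the uniform bounds $K_i$ from Proposition~\ref{prop: dehn surgery theorem}, fiberedness of the fillings from Proposition~\ref{prop:fiberness}, and pseudo-Anosovness of a hyperbolic fibered monodromy from Thurston's hyperbolization theorem.
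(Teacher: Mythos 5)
Your proposal is correct and is exactly the argument the paper intends: the paper gives no separate proof, only the one-line remark that the corollary follows by combining Theorem~\ref{thm:hyperbolicity} (hyperbolicity of the bubble-drilled manifold $E$), hyperbolic Dehn surgery, Proposition~\ref{prop:fiberness}, and Remark~\ref{rem:monodromy}, which is precisely the chain you spell out. Your added care about the sign and ordering conventions (absorbed by the symmetry of $\{|k_i|>K_i\}$ and the choice of heights $t_1<\cdots<t_n$) is a reasonable and correct elaboration of what the paper leaves implicit.
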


The rest of this section is devoted to the proof of Theorem~\ref{thm:hyperbolicity}. We first specify our notation and establish some lemmas. The goal of these lemmas is to demonstrate that many behaviors of bubbles resemble those of real geodesics in \(M\).

Fix a hyperbolic structure on \(M\) via a covering map \(\pi_{M}:\Hyperb^{3}\to M\), and a hyperbolic structure on \(\Sigma\) by a covering map \(\pi_{\Sigma}:\Hyperb^{2}\to \Sigma\). There are two useful metrics on the universal covering \(\widetilde{M}\) of \(M\): one is \(\Hyperb^3\) which is induced by \(\pi_M\); and the other is \(\Hyperb^2\times \Real\), the product metric of the hyperbolic metric on \(\widetilde{\Sigma}\) and the standard metric of \(\Real\). Although the \(\Hyperb^2\times \Real\) metric is not invariant under the deck transformation group of \(\widetilde{M}\to M\), lifts of bubbles exhibit elegant geometry in this metric, whereas they display pathological geometry in the \(\Hyperb^3\) metrics.

Without loss of generality, when referring to the lift of bubbles in \(\widetilde{M}\), we always assume that it is a (level) geodesic in the \(\Hyperb^2\times \Real\) metric.

\begin{lemma}\label{lem:one bubble lift exterior}
  For any essential simple closed curve \(\alpha\in \Sigma\times \left\{ t \right\}\) and any lift \(\widetilde{\alpha}\) of \(\alpha\) in universal covering \(\widetilde{M}\) of \(M\).
  The complement of \(\widetilde{\alpha}\) is homeomorphic to the interior of a solid torus. In particular, \(\pi_{1}(\widetilde{M}\setminus \widetilde{\alpha})\cong \Integer\).
\end{lemma}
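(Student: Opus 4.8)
The plan is to work entirely in the $\Hyperb^2\times\Real$ metric on $\widetilde M$, where the lift $\widetilde\alpha$ is a level geodesic, say $\widetilde\alpha \subseteq \Hyperb^2\times\{t\}$, and $\widetilde\alpha = g\times\{t\}$ for some complete geodesic $g$ in $\Hyperb^2$ that is the lift of the essential simple closed curve $\alpha$ under $\pi_\Sigma$. The key observation is that $\widetilde M$ is diffeomorphic to $\Real^3$ (it is the universal cover of an aspherical $3$-manifold that fibers over $S^1$, hence $\Hyperb^2\times\Real \cong \Real^3$ as smooth manifolds), and $\widetilde\alpha$ is an unknotted properly embedded line in it. So the real content is to exhibit an explicit diffeomorphism of the pair.

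First I would set up coordinates: choose an isometry $\Hyperb^2 \cong \Real^2$ (as smooth manifolds) carrying $g$ to a straight line, say the $x$-axis $\{y=0\}$. Then $\widetilde\alpha$ becomes the coordinate axis $\{(x,0,t) : x\in\Real\}$ inside $\Real^3 = \{(x,y,s)\}$. The complement $\Real^3 \setminus \widetilde\alpha$ deformation retracts onto a circle linking the axis, and more precisely there is a diffeomorphism $\Real^3 \setminus (\text{$x$-axis}) \cong (\Real^2\setminus\{0\})\times\Real \cong (S^1\times (0,\infty))\times\Real \cong S^1\times\Real^2$, which is the interior of a solid torus $S^1\times D^2$. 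Concretely one uses the map sending $(x,y,s)\mapsto \big(\tfrac{(y,s)}{\|(y,s)\|},\, \|(y,s)\|,\, x\big)$ (well-defined since $(y,s)\neq(0,0)$ off the axis). The only thing to check is that the identifications $\Hyperb^2\cong\Real^2$ and $\Hyperb^2\times\Real\cong\widetilde M$ are genuine diffeomorphisms of the ambient space carrying $\widetilde\alpha$ to the axis; this is immediate from the definitions recalled in the paragraph preceding the lemma, where it is stipulated that a lift of a bubble is a level geodesic in the $\Hyperb^2\times\Real$ metric, and $g$ is embedded because $\alpha$ is a simple closed curve, so distinct lifts (and in particular $g$ with itself) do not cross.

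The statement $\pi_1(\widetilde M\setminus\widetilde\alpha)\cong\Integer$ then follows immediately, as $\pi_1$ of the interior of a solid torus is $\Integer$. I do not expect a serious obstacle here; the one point requiring minor care is the claim that $g$ is a properly embedded line in $\Hyperb^2$ rather than a curve with self-intersections or a non-proper geodesic — this uses that $\alpha$ is \emph{essential} and \emph{simple}, so it is freely homotopic to a closed geodesic on $\Sigma$ whose full preimage in $\Hyperb^2$ is a disjoint union of complete embedded geodesics, each a proper embedded copy of $\Real$; $\widetilde\alpha$ is one such, crossed with $\{t\}$. Everything else is the standard fact that an unknotted line in $\Real^3$ has solid-torus complement, which I would state and use without belaboring the ambient isotopy.
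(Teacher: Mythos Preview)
Your proposal is correct and follows essentially the same approach as the paper. The paper's proof is a one-liner---it observes that $\widetilde\alpha$ is (isotopic to) a straight, level hyperbolic line in the $\Hyperb^2\times\Real$ model of $\widetilde M$ and declares the conclusion to follow straightforwardly; you simply spell out the explicit diffeomorphism $\Real^3\setminus(\text{axis})\cong S^1\times\Real^2$ that the paper leaves implicit.
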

\begin{proof}
\(\widetilde{\alpha}\) is isotopic to a straight, level hyperbolic line on \(\Hyperb^2\times \{t\}\) in the \(\Hyperb^{2}\times \Real\) metric of \(\widetilde{M}\).
The conclusion follows straightforwardly.
\end{proof}

\begin{lemma}\label{lem:different ends}
Let $\alpha \in \Sigma\times \{t\}$ be an essential simple closed curve, and let $\widetilde{\alpha}$ be a lift in $\widetilde{M}\cong \mathbb{H}^3$. Then $\widetilde{\alpha}$ is a quasi-geodesic. In particular, the two ends of $\alpha$ in $\partial_{\infty} \mathbb{H}^3$ are disjoint.
\end{lemma}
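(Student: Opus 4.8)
The plan is to realise $\widetilde{\alpha}$ as a bounded perturbation of the translation axis of the loxodromic isometry stabilising it, and then to invoke stability of quasi-geodesics in $\Hyperb^{3}$.

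Write $\Gamma=\pi_{1}M\leq\mathrm{PSL}(2,\Complex)$, and let $g\in\Gamma$ be the element represented by $\alpha$, chosen so that it stabilises $\widetilde{\alpha}$ in $\widetilde{M}$; in the $\Hyperb^{2}\times\Real$ metric, $g$ restricts on $\widetilde{\alpha}=\widetilde{\gamma}\times\{t\}$ to translation by $L_{0}:=\ell_{\Hyperb^{2}}(\alpha)>0$. The first step is to verify that $g$ is a \emph{loxodromic} isometry of $\Hyperb^{3}$. Since $\alpha$ is essential on the fiber, $g$ is a nontrivial element of the torsion-free discrete group $\Gamma$, hence either parabolic or loxodromic; were it parabolic, it would be conjugate into a cusp subgroup $P\cong\Integer^{2}$, and because the cusp tori of a fibered manifold meet the fiber $\Sigma$ in curves isotopic to components of $\partial\Sigma$, the intersection $P\cap\pi_{1}\Sigma$ is carried by $\partial\Sigma$, which would force $\alpha$ to be peripheral in $\Sigma$, contradicting essentiality. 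Hence $g$ is loxodromic, with axis $A_{g}\subset\Hyperb^{3}$, translation length $\ell:=\ell_{\Hyperb^{3}}(g)>0$, and two distinct fixed points $g^{\pm}\in\partial_{\infty}\Hyperb^{3}$.

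Next I would run the coarse estimate. Fix a point $p\in\widetilde{\alpha}$. The orbit points $\{g^{n}\cdot p\}_{n\in\Integer}$ cut $\widetilde{\alpha}$ into segments which are $g$-translates of a single compact arc, hence all of the same finite $\Hyperb^{3}$-diameter $D_{0}$. On the other hand, the minimal-displacement inequality for the loxodromic $g$ yields $|n|\ell\leq d_{\Hyperb^{3}}(p,g^{n}\cdot p)\leq|n|\ell+2\,d_{\Hyperb^{3}}(p,A_{g})$ for every $n$. Parametrising $\widetilde{\alpha}$ by its $\Hyperb^{2}\times\Real$-arclength as $c:\Real\to\widetilde{\alpha}$ with $c(nL_{0})=g^{n}\cdot p$, a routine application of the triangle inequality combining these two bounds shows that $c$ is a quasi-isometric embedding; that is, $\widetilde{\alpha}$ is a quasi-geodesic in $\Hyperb^{3}$. (Equivalently, one may simply note that $\langle g\rangle$ is an undistorted infinite cyclic subgroup of the relatively hyperbolic group $\Gamma$.) Finally, by the Morse stability of quasi-geodesics in the Gromov-hyperbolic space $\Hyperb^{3}$, the curve $\widetilde{\alpha}$ stays within bounded Hausdorff distance of a bi-infinite geodesic, which must be $A_{g}$; consequently the two ends of $\widetilde{\alpha}$ converge in $\partial_{\infty}\Hyperb^{3}$ to the distinct points $g^{\pm}$, as claimed.

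The only genuinely delicate point is the first step --- showing that $g$ is loxodromic rather than parabolic --- which is exactly where essentiality of the bubble (not merely its nontriviality) enters, together with the structure of the cusps of a fibered $3$-manifold. Once loxodromicity is established, the remainder is standard coarse geometry of a cyclic subgroup together with its axis, and it is insensitive to the severe distortion (by the Cannon--Thurston phenomenon) of the fiber plane $\widetilde{\Sigma}$ inside $\widetilde{M}$.
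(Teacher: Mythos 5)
Your proof is correct and follows essentially the same route as the paper's: show that $[\alpha]$ is not conjugate into a peripheral subgroup (here essentiality, i.e.\ non-peripherality in $\Sigma$, together with normality of the fiber subgroup is exactly what is needed), conclude it is loxodromic, and then observe that the $\langle g\rangle$-invariant lift fellow-travels the axis, so it is a quasi-geodesic with the two distinct fixed points of $g$ as its ends. The paper states the quasi-geodesic step without proof, whereas you supply the standard orbit-point/displacement estimate and the Morse lemma; this is just a more detailed write-up of the same argument.
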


\begin{proof}
We view $\pi_1(M)$ as an isometry group acting on $\mathbb{H}^3$.
Since $\pi_1(M)=\pi_1(\Sigma)\ast_{\varphi}$, the element
\([\alpha]\in\pi_1(M)\) is not conjugate into a peripheral subgroup, and is hence loxodromic. Thus, $\widetilde{\alpha}$ is a quasi-geodesic, whose ends coincide with those of the axis preserved by $[\alpha]$.
\end{proof}

The following two observations are crucial in the proof of Theorem~\ref{thm:hyperbolicity}.
\begin{lemma}\label{lem:lifts no common ends}
Different lifts \(\widetilde{\alpha}_i\) of a fixed essential simple closed curve \(\alpha\subset \Sigma\times \left\{ t \right\}\) have disjoint ends on the ideal boundary.
\end{lemma}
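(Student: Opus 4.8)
The goal is to show that two distinct lifts $\widetilde{\alpha}_1, \widetilde{\alpha}_2$ of a fixed essential simple closed curve $\alpha \subset \Sigma\times\{t\}$ have disjoint endpoint sets in $\partial_\infty\mathbb{H}^3$. By Lemma~\ref{lem:different ends} we already know each single lift is a quasi-geodesic with two distinct ends, namely the fixed points of the loxodromic element representing $[\alpha]$; so the content here is that \emph{different} lifts cannot share \emph{any} end.

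\textbf{Plan of proof.} I would argue by contradiction and exploit the correspondence between lifts and cosets. Each lift $\widetilde{\alpha}_i$ is the axis (in the $\mathbb{H}^2\times\Real$ picture it is a level geodesic; in the $\mathbb{H}^3$ picture a quasi-geodesic) of a conjugate $g_i\langle[\alpha]\rangle g_i^{-1}$ of the cyclic subgroup $\langle[\alpha]\rangle$, where $g_i\in\pi_1M$ and the two lifts being distinct means the cosets $g_1\langle[\alpha]\rangle$ and $g_2\langle[\alpha]\rangle$ are distinct, equivalently $g:=g_1^{-1}g_2\notin\langle[\alpha]\rangle$. The two (quasi-)geodesics share an endpoint precisely when the loxodromic elements $a_1:=g_1[\alpha]g_1^{-1}$ and $a_2:=g_2[\alpha]g_2^{-1}$ have a common fixed point on $\partial_\infty\mathbb{H}^3$. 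In a discrete torsion-free Kleinian group, two loxodromic elements sharing exactly one fixed point is impossible (that would force a non-discrete or non-free parabolic-type configuration), so they must share both fixed points; hence $a_1$ and $a_2$ generate a rank-one subgroup and, because $\pi_1M$ is torsion-free and the centralizer of a loxodromic in a Kleinian group is cyclic, $a_1$ and $a_2$ are both powers of a common primitive loxodromic $c$. Conjugating back, $[\alpha]$ and $g^{-1}[\alpha]g$ are both powers of $g_1^{-1}c g_1$, so $[\alpha]$ and $g^{-1}[\alpha]g$ lie in a common cyclic subgroup $C$. Since $\alpha$ is a \emph{simple} closed curve on a fiber, $[\alpha]$ is primitive in $\pi_1\Sigma$ and, by Lemma~\ref{lem:one bubble lift exterior} (or directly because $\pi_1M=\pi_1\Sigma\ast_\varphi$ is a free-by-cyclic-type group in which $\pi_1\Sigma$ is malnormal up to the monodromy), $[\alpha]$ is primitive in $\pi_1M$; hence $C=\langle[\alpha]\rangle=\langle g^{-1}[\alpha]g\rangle$. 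This yields $g\langle[\alpha]\rangle g^{-1}=\langle[\alpha]\rangle$, i.e. $g$ normalizes $\langle[\alpha]\rangle$; again by primitivity and the fact that the normalizer of a maximal cyclic loxodromic subgroup equals that subgroup in a torsion-free Kleinian group, we get $g\in\langle[\alpha]\rangle$, contradicting $g\notin\langle[\alpha]\rangle$. Therefore no common end exists.

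\textbf{Alternative, more elementary route.} If I want to avoid invoking Kleinian-group structure theory, I would instead work in the $\mathbb{H}^2\times\Real$ metric on $\widetilde M$: there the lifts $\widetilde{\alpha}_i$ are genuine level geodesics $\widetilde{\alpha}_i^{(0)}\times\{s_i\}$ in the fiber $\widetilde\Sigma\times\{s_i\}$ (same $s_i=s$ since they are lifts of the same bubble at height $t$, though possibly different $\widetilde\Sigma$-geodesics). Two distinct geodesics in $\widetilde\Sigma\cong\mathbb{H}^2$ that are lifts of the same \emph{simple} closed curve $\alpha$ on $\Sigma$ have disjoint endpoint pairs on $\partial_\infty\mathbb{H}^2$ — this is the standard fact that lifts of a simple closed geodesic form a disjoint family of complete geodesics. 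Then I would transfer this to $\partial_\infty\mathbb{H}^3$ using that the quasi-isometry $\mathbb{H}^2\times\Real\to\mathbb{H}^3$ (identity on $\widetilde M$, comparing the two metrics on a fiber bundle over $S^1$ with hyperbolic total space) extends to a homeomorphism of boundaries, at least on the subset coming from a single fiber, so that distinct endpoints on the $\mathbb{H}^2$-level remain distinct in $\partial_\infty\mathbb{H}^3$; the two ends of a level line land on the two loxodromic fixed points identified in Lemma~\ref{lem:different ends}.

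\textbf{Main obstacle.} The delicate point in both routes is controlling how endpoints behave under the change between the $\mathbb{H}^2\times\Real$ metric and the $\mathbb{H}^3$ metric: the identity map $\widetilde M\to\widetilde M$ is a quasi-isometry but not equivariant, and a priori a quasi-isometry only guarantees a \emph{Hausdorff-close} image, which does not immediately prevent two quasi-geodesics with disjoint $\mathbb{H}^2\times\Real$-ends from acquiring a common $\mathbb{H}^3$-end. So I expect to need the algebraic input (distinctness of cosets $\Rightarrow$ the two loxodromics are not powers of a common element $\Rightarrow$ their axes have four distinct fixed points) rather than a purely coarse-geometric argument; the cleanest write-up is the first route, with the key lemma being that in the discrete group $\pi_1M\le\mathrm{PSL}(2,\Complex)$, two loxodromic elements share a fixed point only if they share both, and a loxodromic representing a primitive curve generates its own centralizer and normalizer.
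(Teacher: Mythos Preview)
Your first route is correct and is essentially the paper's own argument: parametrize lifts by cosets $g\langle[\alpha]\rangle$, identify ends with fixed points of the conjugates $g[\alpha]g^{-1}$, use discreteness to upgrade one shared fixed point to two, land $g$ in the elementary subgroup $E([\alpha])$, and conclude $g\in\langle[\alpha]\rangle$ from primitivity of $[\alpha]$ in $\pi_1M=\pi_1\Sigma\rtimes_\varphi\Integer$. The only quibble is that Lemma~\ref{lem:one bubble lift exterior} is not the right citation for primitivity in $\pi_1M$---your parenthetical semidirect-product argument is the actual reason, and is exactly what the paper uses.
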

\begin{proof}

Let $\widetilde{\alpha}$ be one lift of $\alpha$, and any lift of $\alpha$ can be described as $g(\widetilde{\alpha})$, where $g\in \pi_1M$.
Let $\gamma=[\alpha]\in \pi_1(M)$ be the conjugacy representative corresponding to this lift. Then
 $g(\widetilde{\alpha})$ and $\widetilde{\alpha}$ represent the same lift if and only if $g\in \left\langle \gamma\right\rangle$.

The ends of $\widetilde{\alpha}$ and $g\widetilde{\alpha }$ are exactly, respectively, the fixed points of the loxodromic elements $\gamma$ and $g\gamma g^{-1}$ in $S^2_{\infty}$. If they have one common end, then they actually have two common ends since the isometric action of $\pi_1(M)$ on $\mathbb{H}^3$ is discrete. Moreover, $g$ belongs to the elementary subgroup $E(\gamma)$ associated with $\gamma$. Note that $\pi_1M$ is torsion-free, so $E(\gamma)\cong \mathbb{Z}$, and is generated by the primitive loxodromic element in this class.

Observe that the element $\gamma$ is primitive in $\pi_1(\Sigma)$, and $\pi_1(M)=\pi_1(\Sigma)\rtimes_\varphi \mathbb{Z}$, so $\gamma$ is also primitive in $\pi_1(M)$. Thus, $E(\gamma)=\left\langle \gamma\right\rangle$, and $g\in \left\langle \gamma\right\rangle$.
\end{proof}

\begin{lemma}\label{lem:parallel}
  Let \(\beta_1=\alpha_1\times\{t_1\}\) and \(\beta_2=\alpha_2\times\{t_2\}\) be two bubbles in \(M\), where \(\alpha_1\) and $\alpha_2$ are two essential simple closed curves in \(\Sigma\) and \(t_1\neq t_2\in(0,1)\).
  The following are equivalent:
  \begin{enumerate}
      \item \(\beta_1\) and \(\beta_2\) are freely homotopic in \(M\).
      \item There exists \(k\in \Integer\) such that \(\varphi^{k}(\alpha_1)\) is isotopic to \(\alpha_2\).
      \item Given any lift \(\overline{\beta}_1\) of \(\beta_1\) in \(\Sigma\times \Real\), there exists a lift of \(\beta_2\) in \(\Sigma\times \Real\), which cobounds an upright annulus together with \(\overline{\beta}_1\) (Definition~\ref{def:upright}).
      \item Given any lift \(\widetilde{\beta}_1\) of \(\beta_1\) in the universal covering \(\widetilde{M}\cong \Hyperb^{3}\), there exists a lift of \(\beta_2\) in \(\widetilde{M}\), which has the same accumulative ends as \(\widetilde{\beta}_1\) at the ideal boundary \(S^2_\infty\).
  \end{enumerate}
\end{lemma}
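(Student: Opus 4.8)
The plan is to prove $(1)\Leftrightarrow(2)$ by a conjugacy computation in the mapping torus group, then close the loop among the first three conditions via $(2)\Rightarrow(3)\Rightarrow(1)$, and finally establish $(1)\Leftrightarrow(4)$ using Lemma~\ref{lem:different ends} and the primitivity input from the proof of Lemma~\ref{lem:lifts no common ends}. For $(1)\Leftrightarrow(2)$, write $\pi_1(M)=\pi_1(\Sigma)\rtimes_\varphi\Integer=\langle\pi_1(\Sigma),\tau\mid\tau x\tau^{-1}=\varphi_*(x)\rangle$. Since $\beta_i$ lies on a fiber surface, its free homotopy class in $M$ is the $\pi_1(M)$-conjugacy class of an element $x_i\in\pi_1(\Sigma)$ whose $\pi_1(\Sigma)$-conjugacy class represents the free homotopy class of $\alpha_i$ in $\Sigma$. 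Writing a general $g\in\pi_1(M)$ as $\tau^k h$ with $h\in\pi_1(\Sigma)$ gives $gx_1g^{-1}=\varphi_*^k(hx_1h^{-1})$, so $x_1$ and $x_2$ are $\pi_1(M)$-conjugate iff $\varphi_*^k(x_1)$ is $\pi_1(\Sigma)$-conjugate to $x_2$ for some $k\in\Integer$; translating back and using that freely homotopic essential simple closed curves on a surface are isotopic, this says precisely that $\varphi^k(\alpha_1)$ is isotopic to $\alpha_2$ for some $k$ (orientations mattering only up to inversion, which is harmless for unoriented curves).

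For $(2)\Rightarrow(3)$, note first that $[\beta_1]$ lies in $\pi_1(\Sigma)=\ker(\pi_1(M)\to\Integer)$, so the deck group $\Integer$ of $\pi_\Integer\colon\Sigma\times\Real\to M$ permutes the lifts of $\beta_1$ transitively and it suffices to treat $\overline\beta_1=\alpha_1\times\{t_1\}$; the lifts of $\beta_2$ are then the curves $\varphi^j(\alpha_2)\times\{t_2-j\}$, $j\in\Integer$. If $\varphi^k(\alpha_1)$ is isotopic to $\alpha_2$, equivalently $\alpha_1$ is isotopic to $\varphi^{-k}(\alpha_2)$, put $\overline\beta_2=\varphi^{-k}(\alpha_2)\times\{t_2+k\}$, whose height differs from $t_1$ since $t_1,t_2\in(0,1)$ and $t_1\neq t_2$. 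Choosing an ambient isotopy $\{c_s\}_{s\in[0,1]}$ of $\Sigma$ with $c_0=\alpha_1$, $c_1=\varphi^{-k}(\alpha_2)$ and interpolating the height linearly, the image $\{(c_s(u),(1-s)t_1+s(t_2+k))\}$ is an annulus cobounded by $\overline\beta_1$ and $\overline\beta_2$; because the height is strictly monotone in $s$, it is embedded with interior transverse to every level $\Sigma\times\{t\}$, hence upright. For $(3)\Rightarrow(1)$ it suffices to observe that any annulus in $\Sigma\times\Real$ with boundary $\overline\beta_1\sqcup\overline\beta_2$ composes with $\pi_\Integer$ to a free homotopy from $\beta_1$ to $\beta_2$ in $M$, since each $\overline\beta_i$ maps homeomorphically onto $\beta_i$.

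For $(1)\Leftrightarrow(4)$, view $\pi_1(M)$ as a Kleinian group via $\pi_M\colon\Hyperb^3\to M$. By Lemma~\ref{lem:different ends}, a lift $\widetilde\beta$ of a bubble $\beta$ is a quasi-geodesic whose stabilizer in $\pi_1(M)$ is $\langle\gamma\rangle$ for a loxodromic $\gamma$ conjugate to $[\beta]^{\pm1}$, and the set of accumulation points of $\widetilde\beta$ on $S^2_\infty$ is exactly the fixed-point pair $\mathrm{Fix}(\gamma)$. Hence, as $\widetilde\beta$ runs over the lifts of $\beta$, its accumulative ends run over $\{\mathrm{Fix}(g\gamma_\beta g^{-1}):g\in\pi_1(M)\}$ for a fixed representative $\gamma_\beta$ of $[\beta]$, so $(4)$ amounts to $\mathrm{Fix}(\gamma_{\beta_1})=\mathrm{Fix}(\delta\gamma_{\beta_2}\delta^{-1})$ for some $\delta$. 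If $(1)$ holds, $[\beta_1]$ and $[\beta_2]$ are conjugate up to inversion and this is immediate. Conversely, if $(4)$ holds, pick $\widetilde\beta_1$ with stabilizer $\langle\gamma_1\rangle$ and a lift $\widetilde\beta_2$ with stabilizer $\langle\gamma_2\rangle$ satisfying $\mathrm{Fix}(\gamma_1)=\mathrm{Fix}(\gamma_2)$; since $\pi_1(M)$ is discrete and torsion-free, $\langle\gamma_1,\gamma_2\rangle$ is infinite cyclic, generated by the primitive loxodromic $\delta$ with $\mathrm{Fix}(\delta)=\mathrm{Fix}(\gamma_1)$, so $\gamma_1=\delta^a$ and $\gamma_2=\delta^b$. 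Each $\gamma_i$ is conjugate to $[\alpha_i]^{\pm1}$, which is primitive in $\pi_1(M)$ exactly as argued in the proof of Lemma~\ref{lem:lifts no common ends}; thus $a,b\in\{\pm1\}$, $\gamma_1=\gamma_2^{\pm1}$, and $[\beta_1]$, $[\beta_2]$ are conjugate up to inversion, i.e.\ $\beta_1$ and $\beta_2$ are freely homotopic, which is $(1)$.

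The part needing the most care is $(4)\Rightarrow(1)$: it relies on the structure of the elementary stabilizer of an axis in the discrete torsion-free group $\pi_1(M)$ and on primitivity of simple closed curve classes — essentially replaying Lemma~\ref{lem:lifts no common ends} — together with the bookkeeping identifying lifts of a bubble with conjugates of its class and hence with fixed-point pairs on $S^2_\infty$. Minor recurring points are the orientation ambiguity (conjugacy up to inversion versus free homotopy of unoriented loops) and, in $(2)\Rightarrow(3)$, the observation that distinct heights force the height function to be monotone along the interpolation annulus, which is what makes it embedded and level-transverse.
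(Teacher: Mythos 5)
Your proof is correct and follows essentially the same route as the paper: the semidirect-product conjugacy computation for $(1)\Leftrightarrow(2)$, an explicit level-monotone annulus for $(2)\Rightarrow(3)$, and the discreteness-plus-primitivity argument (as in Lemma~\ref{lem:lifts no common ends}) to recover conjugacy from coinciding fixed-point pairs for $(4)\Rightarrow(1)$. You merely organize the implications as a hub around $(1)$ rather than the paper's single cycle $(1)\Rightarrow(2)\Rightarrow(3)\Rightarrow(4)\Rightarrow(1)$, and you fill in details the paper dismisses as ``clear'' or ``straightforward.''
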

\begin{proof}
    \((1)\Rightarrow(2)\) is clear from that \(\pi_1(M)\) is a HNN-extension \(\pi_1(\Sigma)*_\varphi\) of
    \(\pi_1(\Sigma)\) and \((2)\Rightarrow(3)\) and \((3)\Rightarrow(4)\) are straightforward.
    Thus, it suffices to show that \((4)\Rightarrow(1)\).
    If two respective lifts of \(\beta_1\) and \(\beta_2\) in \(\widetilde{M}\) have the same accumulative ends at the ideal boundary \(S^2_\infty\), it follows that \([\beta_1]\) and \([\beta_2]\) belong to the same conjugacy class in \(\pi_1(M)\) since \([\beta_i]\in\pi_1(M)\) is primitive and loxodromic.
    The conclusion follows from the bijection between the conjugacy classes in \(\pi_1(M)\) and the freely homotopic equivalence class of closed curves in \(M\).
\end{proof}

\begin{remark}
  It is worth noting that the pathological geometric images of bubble lifts in the universal covering \(\Hyperb^{3}\) are clear by the work on the Cannon-Thurston map.
  Specifically speaking, Bowditch's results in~\cite[Section 9]{Bo07a} showed the following surprising fact.
  \begin{fact}
    The lift \(\Hyperb^{2}\to\Hyperb^{3}\) of \(\Sigma\to M\) uniquely extends to a continuous, surjective map \(\iota:\Sph^{1}\cong \partial \Hyperb^{2}\xrightarrow{\iota} \partial \Hyperb^{3}\cong \Sph^{2}\).
    Moreover, the endpoints of any leaf of the two invariant laminations \(\lambda^{\pm}\) (one stable and the other unstable) have the same image under \(\tau\), and in fact this case generates all identifications occurring under the surjective map \(\tau\).
  \end{fact}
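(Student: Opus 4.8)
The statement is the Cannon--Thurston theorem for the fibered (possibly cusped) hyperbolic $3$-manifold $M$, so the plan is to establish three things in turn: the continuous extension $\iota$, its surjectivity, and the description of its point-identifications by the laminations $\lambda^{\pm}$. I would work with the normal subgroup $\pi_1\Sigma \trianglelefteq \pi_1 M$ with quotient $\Integer$, viewing $\pi_1\Sigma$ and $\pi_1 M$ as acting on $\Hyperb^2$ and $\Hyperb^3$ respectively. Because $\Sigma$ may have punctures, both groups are only hyperbolic relative to their cusps, so throughout I would pass to the \emph{electric} (coned-off) metrics obtained by collapsing horoball cusp neighborhoods; this makes $\pi_1\Sigma$ act as a relatively hyperbolic group and lets me treat the cusped case uniformly with the closed case.

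For the existence of $\iota$, the plan is to verify the properness criterion of Mitra (Mj): for every $R>0$ there is $R'>0$ such that any $\Hyperb^2$-geodesic segment lying outside the $R'$-ball about a basepoint $o$ has image, under the lifted embedding $\Hyperb^2\hookrightarrow\Hyperb^3$, lying outside the $R$-ball about $o$ in $\Hyperb^3$. The tool is the \emph{ladder} construction: given a geodesic $\lambda$ in the fiber $\Hyperb^2\times\{0\}$, one builds a quasiconvex ladder $\mathcal{L}_\lambda\subset\widetilde{M}$ by flowing $\lambda$ through the levels and taking geodesic representatives, then produces a coarsely Lipschitz retraction $\widetilde{M}\to\mathcal{L}_\lambda$. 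The heart of the matter is the \emph{flaring} of the suspension flow: since $\varphi$ is pseudo-Anosov, transverse leaf-lengths grow exponentially as one flows in one direction and shrink in the other, which is precisely the Bestvina--Feighn hyperbolic flaring condition. This flaring yields quasiconvexity of $\mathcal{L}_\lambda$, hence the properness estimate and a continuous, $\pi_1\Sigma$-equivariant extension $\iota:\Sph^1\to\Sph^2$.

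Surjectivity then follows formally: the image $\iota(\Sph^1)$ is the limit set of $\pi_1\Sigma$ acting on $\Hyperb^3$, and since $\pi_1\Sigma$ is an infinite normal subgroup of the non-elementary Kleinian group $\pi_1 M$, its limit set is all of $\Sph^2$. For the identifications, I would show that distinct ideal points $x,y\in\Sph^1$ satisfy $\iota(x)=\iota(y)$ exactly when they are the two endpoints of a leaf of $\lambda^+$ or of $\lambda^-$. One direction is geometric: the endpoints of a stable leaf are joined by a fiber geodesic whose forward-flowed images shrink in diameter, forcing equal images in $\Sph^2$ (and symmetrically for $\lambda^-$ under backward flow). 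For the converse---that these are \emph{all} the identifications---the plan is to invoke the ending-lamination description of the two ends of $M$ together with the fact that collapsing $\Sph^1$ along exactly the leaf-pairs of $\lambda^+\cup\lambda^-$ produces a space homeomorphic to $\Sph^2$, which must coincide with the quotient realized by the continuous surjection $\iota$.

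The main obstacle is the existence half, namely the flaring/properness estimate underlying quasiconvexity of the ladder; this is where the genuine hyperbolic geometry of the pseudo-Anosov suspension enters, and in the cusped case it demands care in the electric metric so that coning off horoballs does not destroy the flaring of leaves running deep into cusps. The identification statement is comparatively soft once existence is in hand, but verifying that $\lambda^{\pm}$ generate \emph{all} identifications still leans on the ending-lamination machinery and is the second most delicate point.
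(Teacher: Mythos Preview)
The paper does not prove this statement at all: it is stated as a \emph{Fact} in a remark and attributed entirely to Bowditch~\cite[Section 9]{Bo07a}. There is no argument in the paper to compare your proposal against; the authors simply quote the Cannon--Thurston theorem as background to explain why bubble lifts look pathological in the $\Hyperb^3$ metric, and never use it in the main proofs.

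That said, your outline is a sound plan for proving the result, but it follows the Mitra--Mj ladder/flaring route rather than Bowditch's own approach. Bowditch works more directly with the singular Sol geometry of the suspension and the pair of transverse foliations, and his identification step is phrased in terms of the structure of the quotient by the equivalence relation generated by leaf-endpoints (using Moore's decomposition theorem to recognise the quotient as $\Sph^2$). Your sketch of surjectivity and of the ``leaves are identified'' direction is fine. The weakest link in your plan is the converse identification step: asserting that the quotient of $\Sph^1$ by the leaf relation is homeomorphic to $\Sph^2$, and that it \emph{therefore} agrees with the quotient induced by $\iota$, is not automatic---one still has to show that $\iota$ factors through that quotient and that the induced map is injective, which is exactly where the hard work lies. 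In the cusped case you should also be explicit that the equivalence relation is the one \emph{generated} by leaf-endpoint pairs, since ideal vertices of complementary polygons (and parabolic fixed points) can force chains of identifications beyond single leaves.
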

\end{remark}
Now we turn to the proof of Theorem~\ref{thm:hyperbolicity}. In fact, according to Sakai's observation and the proof formulated in~\cite{Ko88a}, a manifold obtained by drilling a simple geodesic link in a closed hyperbolic manifold is hyperbolic. However, our case is more complicated due to the pathology of bubbles.
\begin{proof}[Proof of Theorem~\ref{thm:hyperbolicity}]
  By Thurston's uniformization theorem for manifold with boundary~\cite{Th82a,Mo84a}, we will show one by one that the bubble-drilled manifold \(E\) is irreducible, admits no Seifert fibrations, boundary-incompressible and contains no essential, non-boundary-parallel tori.
\begin{enumerate}[label=(\arabic*),nosep,labelindent=\parindent, leftmargin=0pt, widest=0, itemindent=*]
\item \(E\) is irreducible.
  Since all \(\beta_{i}\) are homotopically nontrivial, they cannot lie in a $3$-ball. The irreducibility of \(E\) follows from the irreducibility of \(M\).
\item \(E\) admits no Seifert fibrations.
  Since there is one specific Dehn filling \(E\) that restores \(M\), it follows that if \(E\) has a Seifert fibration, then \(M\) either is reducible or admits a new Seifert fibration. Both contradict the assumption that \(M\) is hyperbolic.
\item \(E\) is boundary-incompressible.
  Otherwise, by irreducibility, $E$ is homeomorphic to a solid torus, which is impossible.
\item \(E\) contains no incompressible, non-boundary-parallel tori.
  If there exists one, denoted by \(T\), then \(T\) is compressible in \(M\).
  By Dehn's lemma,
  there exists a properly embedded 2-disk $(D,\partial D)\hookrightarrow (M,T)$, and clearly there exists a bubble \(\beta_i\) that intersects $D$. Then, \((T\setminus N(\partial D) ) \cup (D\times \left\{ -\epsilon,\epsilon \right\} ) \cong S^{2}\) bounds a 3-ball $B$ in $M$ by irreducibility.

  Now we choose lifts \(\widetilde{T}\) and \((\widetilde{D} ,\partial \widetilde{D} )\hookrightarrow(\widetilde{M},\widetilde{T})\) in the universal covering of \(M\).
  \(\widetilde{T}\) may be homeomorphic to a torus or an open annulus since \(\pi_{1}(T)\xrightarrow{i_{*}} \pi_{1}(M) \) has non-trivial kernel. We discuss them respectively.
  \end{enumerate}
\begin{enumerate}[label=(\Roman*),nosep,labelindent=\parindent, leftmargin=0pt, widest=0, itemindent=*]
  \item If \(\widetilde{T}\) is homeomorphic to a torus.
  Select a cylinder \(B(0,r)\times (-t,t)\subset \widetilde{M}\cong \Hyperb^{2}\times \Real\), where \(r,t\) are sufficiently large so that its interior contains \(\widetilde{T}\). Then \(\widetilde{T}\subset B(0,r)\times (-t,t) -\cup_{l}\widetilde{\beta}^{(l)}\) where \(\widetilde{\beta}^{(l)}\) are finitely many lifts of the bubbles that intersect the cylinder.
  However, the latter manifold is a cylinder drilling finitely many disjoint straight lines on levels, and is hence homeomorphic to  the interior of a handlebody. In particular, it contains no closed incompressible surfaces and we have obtained a contradiction.

  \item If \(\widetilde{T}\) is homeomorphic to an open annulus. Since \(\widetilde{T}\) is invariant by the deck transformation of some prmitive element \(\gamma\in \pi_{1}(M)\), each end of \(\widetilde{T}\) approaches a limit point of $\gamma$ in the ideal boundary \(S^{2}_{\infty}\). Let \(\widetilde{U}\) be the closure of the component of \(\Hyperb^{3}-\widetilde{T}\) whose ends meet \(S^{2}_{\infty}\) in exactly these points.
  Actually, we can also lift \(D\) and \(B\) to \(\widetilde{U}\subseteq \Hyperb^{3}\).
  Thus, the projection \(\widetilde{U}\to \pi_{M}(\widetilde{U})=U\) is an infinite cyclic covering,   \(\widetilde{U}\) is a  bi-infinite sequence of 3-balls \(\widetilde{B}_{k}\) concated by 2-disks \(\widetilde{D}_{k}\). In particular, $\widetilde{U}$ is homeomorphic to a closed 3-ball removing the north and south poles.

 Recall that $\widetilde{D}\subseteq \widetilde{U}$.
  Since $\widetilde{D}$ intersects a lift $\widetilde{\beta_i}$ of a bubble $\beta_i\subseteq M\setminus T$, it follows that $\widetilde{U}\supseteq \beta_i$. In other words, $\widetilde{U}$ contains a quasi-geodesic by Lemma~\ref{lem:different ends}, so $\gamma$ is a loxodromic element, and $\widetilde{U}$ has two disjoint ends.
  We divide the discussion into two cases based on the number of lifts of bubbles contained in $\widetilde{U}$.

  \end{enumerate}
\begin{enumerate}[label=(\roman*),nosep,labelindent=\parindent, leftmargin=0pt, widest=0, itemindent=*]
    \item Assume that there is only one lift \(\widetilde{\beta}\) in \(\widetilde{U}\). We split \(\Hyperb^{3}-\widetilde{\beta}\) by \(\widetilde{T}\) into two parts.
  Since \(\widetilde{T}\) is incompressible both in \(\Hyperb^{3}-\Int\widetilde{U}\) and \(\widetilde{U}-\widetilde{\beta}\),
  \(\Integer\cong\pi_{1}(\Hyperb^{3}-\widetilde{\beta})\) is isomorphic to the amalgamated product of \(\pi_{1}(\Hyperb^{3}-\Int\widetilde{U})\) and \(\pi_{1}(\widetilde{U}-\widetilde{\beta})\) over \(\pi_{1}(\widetilde{T})\cong \Integer\) via van Kampen's theorem and Lemma~\ref{lem:one bubble lift exterior}.
  Hence, \(\pi_{1}(\widetilde{U}-\widetilde{\beta})\) must be isomorphic to \(\Integer\).

  It is clear that $\widetilde{U}-\widetilde{\beta}=\pi_M^{-1}(U\cap E)\cap \widetilde{U}$  is a  \(\Integer\)-covering space of \(U\cap E\), whose fundamental group is isomorphic to \(\Integer\).
  By the fibration theorem of Stallings~\cite{St62a}, \(U\cap E\) fibers over the circle so that the fiber \(F\) has fundamental group \(\pi_{1}(F)\cong \Integer\). \(F\) must be an annulus and \(U\cap E\) is homeomorphic to \(T^{2}\times I\) because \(M\) is orientable. Therefore, \(T\) is parallel to a boundary of component of \(E\), leading to a contradiction.

  \item Assume that there are more than one lifts of bubbles in \(\widetilde{U}\).

Suppose $\widetilde{\beta}_1$ and $\widetilde{\beta}_2$ are two lifts of bubbles \(\beta_1,\beta_2\), respectively and are contained in $\widetilde{U}$.
Then the ends of  $\widetilde{\beta}_1$ and $\widetilde{\beta}_2$ at \(S^2_\infty\) are identical with the ends of $\widetilde{U}$.
It follows from Lemma~\ref{lem:lifts no common ends} that \(\beta_1\neq\beta_2\).
Let \(\widetilde{\pi}_\Sigma\) denote the canonical covering map \(\widetilde{M}\cong \Hyperb^2\times \Real\to\Sigma\times \Real\).
Hence, by Lemma~\ref{lem:parallel},
there exist a lift $\widetilde{\beta}_2'$ of \(\beta_2\) such that $\widetilde{\beta}_2'$ and $\widetilde{\beta}_1$ have identical ends at \(S^2_\infty\), and  \(\widetilde{\pi}_\Sigma(\widetilde{\beta}_1)\) and \(\widetilde{\pi}_\Sigma(\widetilde{\beta}_2')\) cobound an upright annulus $H$ in $\Sigma\times\mathbb{R}$.
Consequently, \(\widetilde{\beta}_2'=\widetilde{\beta}_2\) via Lemma~\ref{lem:lifts no common ends} again.
Now, flow-acoannularity of bubbles implies that there exists a level $\Sigma\times \{t_0\}$ which intersects the interior of $H$ and contains a bubble lift $\varrho$, such that $H\cap (\Sigma\times \{t_0\})$ has positive geometric intersection number with $\varrho$ in $\Sigma\times \{t_0\}$.

As before, when referring to a bubble lift in \(\widetilde{M}\) or \(\Sigma\times \Real\), we always assume that it has been isotoped to be a geodesic in the \(\Hyperb^2\times \Real\) metric.
  Under this assumption, the upright annulus between two lifts can be chosen with the form \(\alpha_{*}\times [t_{1},t_{2}]\).
In summary, in $\widetilde{M}\cong\mathbb{H}^2\times \mathbb{R}$, there exists a lift $\widetilde{H}\cong \mathbb{R}\times [0,1]$ of \(H\) such that $\partial \widetilde{H}=\widetilde{\beta}_1\cup \widetilde{\beta}_2$, and
further, a lift $\widetilde\varrho$ of \(\varrho\) that intersects transversely with $\widetilde{H}$ exactly once.

Now we add in the ideal boundary $S^2_\infty$ of $\mathbb{H}^3\cong \widetilde{M}$. We claim that the closure of $\widetilde{\varrho}$ and $\widetilde{U}$ in $\overline{\mathbb{H}^3}$ are disjoint.
Otherwise, \(\widetilde{U}\), $\widetilde{\varrho}$, $\widetilde{\beta}_1$ and $\widetilde{\beta}_2$ have common ends since $\widetilde{\varrho}$ is disjoint with $\partial \widetilde{U}=\widetilde{T}$.
Therefore, according to Lemma~\ref{lem:parallel}, $\varrho=\widetilde{\pi}_\Sigma(\widetilde{\varrho})$ is freely homotopic to $\widetilde{\pi}_\Sigma(\widetilde{\beta})$ in $\Sigma\times \mathbb{R}$, which contradicts our construction.

\def\cls{\mathrm{cls}}
In addition, the closure of $\widetilde{U}$ in $\overline{\mathbb{H}^3}$ is homeomorphic to a closed 3-ball and
$\sigma =\cls(\widetilde{\beta}\cup \widetilde{\beta}')$ forms a loop in $\cls(\widetilde{U})\subseteq \overline{\mathbb{H}^3}-\cls(\widetilde{\varrho})$.
Since $\cls(\widetilde{U})$ is contractible, it follows that $\sigma$ is null-homotopic in $\overline{\mathbb{H}^3}-\cls(\widetilde{\varrho})$.
However, it is clear that $(\overline{\mathbb{H}^3},\cls(\widetilde{\varrho}))$ is homeomorphic to the standard pair $(\overline{B^3}, \text{diameter})$, and $\sigma$ bounds a 2-disk $\cls(\widetilde{H})$ in $\overline{\mathbb{H}^3}$, which intersects transversly with $\widetilde{\varrho}$ exactly once.
Therefore, $\sigma$ is not null-homotopic in  $\overline{\mathbb{H}^3}-\cls(\widetilde{\varrho})$, which leads to a contradiction.
\end{enumerate}
\end{proof}

\begin{figure}[b]
  \centering
  % \begin{adjustwidth}{-1cm}{-1cm}
  \begin{minipage}[bt]{0.42\textwidth}
    \centering
\includegraphics[width=0.8\linewidth]{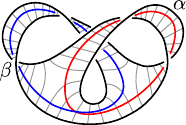}
    \captionsetup{width=0.8\textwidth}
    \caption{A fiber of the fibration of figure-right knot}\label{fig:Seifertsurface}
  \end{minipage} %
  \begin{minipage}[bt]{0.5\textwidth}
    \centering
    \includegraphics[width=\linewidth]{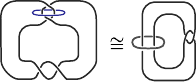}
    \captionsetup{width=\textwidth}
    \caption{A bubbled-drilled manifold homeomorphic to Whitehead link complement}\label{fig:result-whiteheadlink}
  \end{minipage}
  % \end{adjustwidth}

\end{figure}

\section{New examples for profinite rigidity}\label{sec:example}

According to Theorem~\ref{thm:secondmain}, we can provide numerous examples of profinitely rigid 3-manifolds, derived from the easily verified hyperbolization condition (see Theorem~\ref{thm:hyperbolicity}) and the well-known profinitely rigid surface bundles over \(S^{1}\), namely, once-punctured torus~\cite{Br17a} and four-punctured sphere~\cite{CW23a}.
Many of these examples exhibit highly complicated hyperbolic structures.
For example, we are able to construct profinitely rigid finite-volume hyperbolic 3-manifolds with arbitrarily many cusps.
In this section, we present several easily understandable examples among them.

If the origin manifold \(M\), before being bubble-drilled, is the complement of a fibered knot or a fibered link in \(S^{3}\),
then bubbled-drilled manifold \(E\) is a link complement in \(S^{3}\), which can be drawn on a sheet conveniently.
Following this procedure, we can prove that many link complements in \(S^{3}\) are profinitely rigid.
We select some particularly elegant examples and exhibit the procedure in detail, which proves the following theorem.

\begin{theorem}\label{cor:example}
The complement spaces of the Whitehead link, the Borromean ring and a specific 5-chain link in $S^3$ are profinitely rigid in $\mathfrak{M}$ (see Figure~\ref{fig:comb}).
\end{theorem}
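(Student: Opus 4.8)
The plan is to realize each of the three link complements as a bubble-drilled manifold $E = M\setminus\mathring N(\beta_1\cup\cdots\cup\beta_n)$ satisfying the hypotheses \HC{} and \RC{} of Theorem~\ref{thm:secondmain}, and then to identify $E$ with the link complement by an explicit diagram. For the Whitehead link I would take $M$ to be the figure-eight knot complement, which fibers over $S^1$ with once-punctured torus fiber $\Sigma$ and pseudo-Anosov monodromy $\varphi$ represented on $H_1(\Sigma)\cong\Integer^2$ by $\left(\begin{smallmatrix} 2 & 1 \\ 1 & 1 \end{smallmatrix}\right)$, and drill a single bubble $\beta$ on one fiber: the resulting $2$-cusped manifold is homeomorphic to the Whitehead link complement, which is precisely what Figures~\ref{fig:Seifertsurface} and~\ref{fig:result-whiteheadlink} record. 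The Borromean rings complement ($3$ cusps) and the specific $5$-chain link complement ($5$ cusps) are handled the same way, drilling two and four bubbles respectively from a fibered hyperbolic manifold whose fiber is a once-punctured torus or a four-punctured sphere according to the example; the precise base manifold and the precise isotopy classes of the drilled curves are the combinatorial content of Section~\ref{sec:example}. Since each link complement $S^3\setminus\mathring N(L)$ is a compact orientable $3$-manifold with toral boundary, it lies in $\mathfrak{M}$, so it suffices to prove that $E$ itself is profinitely rigid in $\mathfrak{M}$.

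To verify \HC{} I would apply Theorem~\ref{thm:hyperbolicity}: $M$ is hyperbolic in every case, so the only point is that the chosen bubbles are flow-acoannular. The decisive observation is that $\varphi$, being pseudo-Anosov, admits no nonzero power fixing the isotopy class of an essential simple closed curve; hence two distinct terms $\alpha_k,\alpha_l$ of the extended sequence $\mathcal{A}_{\Integer\varphi}$ of Definition~\ref{def:flow-acoannular 1} can be isotopic only when they arise from two of the curves $\alpha_1,\ldots,\alpha_n$ lying in the same $\varphi$-orbit on $\Sigma$. Consequently, if the $\alpha_i$ are chosen in pairwise distinct $\varphi$-orbits the flow-acoannular condition holds vacuously, and for a single bubble there is nothing to check at all. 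For $\Sigma$ a once-punctured torus the $\varphi$-orbits on isotopy classes of essential simple closed curves are the orbits of a hyperbolic element of $\mathrm{SL}_2(\Integer)$ on $\mathbb{P}^1(\Rational)$, so distinctness of orbits reduces to an elementary computation — for instance the slopes $[1:0]$ and $[0:1]$ lie in different orbits of $\left(\begin{smallmatrix} 2 & 1 \\ 1 & 1 \end{smallmatrix}\right)$, which settles the Borromean case — and the four-punctured sphere case is entirely analogous via the identification of its curve complex with the Farey graph.

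Once \HC{} is in place, \RC{} holds by~\cite{Br17a} when $\Sigma$ is a once-punctured torus and by~\cite{CW23a} when $\Sigma$ is a four-punctured sphere, so Theorem~\ref{thm:secondmain} gives immediately that $E$ is profinitely rigid in $\mathfrak{M}$; combined with the homeomorphism between $E$ and the link complement, this proves the theorem. I expect the main obstacle to be the diagrammatic bookkeeping rather than anything conceptual: one must exhibit curves $\alpha_i$ on the fiber that \emph{simultaneously} (a) drill to the desired link complement and (b) lie in pairwise distinct $\varphi$-orbits — or, if isotopic bubbles are forced, admit the required intermediate curve of positive intersection number — and then establish the homeomorphism of $E$ with the link complement through an explicit sequence of isotopies and handle slides. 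A secondary point requiring care is to confirm that each curve being drilled genuinely is a bubble, that is, essential on a fiber surface of the chosen fibration of $M$; granting this, Proposition~\ref{prop:fiberness} and the machinery of Sections~\ref{sec:limit}--\ref{sec:hyperbolicity} apply without change.
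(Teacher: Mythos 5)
Your proposal follows essentially the same route as the paper: realize each complement as a bubble-drilled manifold, verify {\HC} (the paper's primary argument simply cites the known hyperbolicity of these link complements from \cite{Th80a} and \cite{MR13}, and offers your flow-acoannularity argument via Theorem~\ref{thm:hyperbolicity} only as the alternative) and {\RC} via \cite{Br17a}/\cite{CW23a}, then apply Theorem~\ref{thm:secondmain}. The one factual mismatch is the 5-chain case: the paper drills a \emph{single} bubble from a three-punctured-disk bundle that is already a 4-component link complement (a closed 3-braid together with its axis), rather than four bubbles from a once-cusped bundle, so flow-acoannularity there is automatic and no orbit computation is needed.
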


\paragraph{\textbf{(1) The origin manifold $M$ is the figure-eight knot complement.}}
It is well known that the figure-eight knot complement is a hyperbolic once-punctured torus bundle over \(S^{1}\), whose fiber surface  is illustrated in Figure~\ref{fig:Seifertsurface}.
In this case, the rigidity condition {\RC} in Theorem~\ref{thm:secondmain} is satisfied according to~\cite{Br17a}.

The fundamental group of the fiber is generated by two simple closed curves \(\alpha\) (in red) and \(\beta\) (in blue), with a geometric intersection number of one.
For instance, drilling \(\beta\) away produces a bubbled-drilled manifold homeomorphic to the Whitehead link complement (see Figure~\ref{fig:result-whiteheadlink}). Drilling both \(\alpha\) and \(\beta\) away in specified adjacent fibers simultaneously results in a bubble-drilled manifold homeomorphic to the Borromean ring complement (see Figure~\ref{fig:result-Borromeanring}).
In these cases, the hyperbolicity condition {\HC} is satisfied according to~\cite[Chapter 3.3 \& 3.4]{Th80a}.
Alternatively, one can also show that these bubbles form a flow-acoannular sequence, so as to obtain condition {\HC} from Theorem~\ref{thm:hyperbolicity}.
Therefore, the Whitehead link complement and the Borromean ring complement are both profinitely rigid in $\mathfrak{M}$.

\begin{figure}[t]
\centerline{\includegraphics[width=0.8\textwidth]{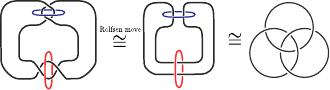}}
\caption{A bubbled-drilled manifold homeomorphic to Borromean ring complement}\label{fig:result-Borromeanring}
\end{figure}

\begin{figure}[b]
  \centering
  % \begin{adjustwidth}{-1cm}{-1cm}
  \begin{minipage}[bt]{0.48\textwidth}
    \centering
    \includegraphics[width=0.6\textwidth]{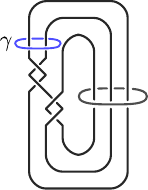}
    \captionsetup{width=0.95\textwidth}
    \caption{A specific hyperbolic three-punctured disk surface bundle over \(S^{1}\). \(\gamma\) is a chosen bubble.}\label{fig:3-braid}
  \end{minipage} %
  \begin{minipage}[bt]{0.48\textwidth}
    \centering
    \includegraphics[width=0.5\textwidth]{5-chain.pdf}
    \captionsetup{width=\textwidth}
    \caption{The drilled-manifold illutrated left is homeomorphic to a 5-chain link complement.}\label{fig:5-chain}
  \end{minipage}
  % \end{adjustwidth}

\end{figure}

\paragraph{\textbf{(2) The origin manifold $M$ is a  three-punctured disk bundle over \(S^{1}\)}}
Three-punctured disk bundles are  special examples  of four-punctured sphere  bundles over \(S^{1}\) and, as such, profinitely rigid in $\mathfrak{M}$ according to~\cite{CW23a}. Thus, the rigidity condition {\RC} holds in this case.

There is a natural model for these manifolds \(M\) as a link complement in \(S^{3}\).
To construct the link, one can simply close a 3-braid while passing it through an unknot.
The fiber can then be chosen to embed into the disk bounded by the unknot.
By repeating the bubble-drilling construction
  resembling those outlined earlier, we generate a large number of profinitely rigid examples. We select an insightful one among them.

For a specific three-punctured disk bundle over \(S^{1}\) (see Figure~\ref{fig:3-braid}), we drill the bubble \(\gamma\), perform some isotopies and Rolfsen moves and ultimately obtain a 5-chain link (see Figure~\ref{fig:5-chain}). Note that the complement of any 5-chain link is hyperbolic according to~\cite[Theorem  5.1(ii)]{MR13}, so the hyperbolicity  condition {\HC} holds. Alternatively, one can show that this three-punctured disk bundle is hyperbolic, and deduce condition {\HC} from Theorem~\ref{thm:hyperbolicity}, since flow-acoannularity automatically holds when there is exactly one bubble. Thus, the complement of this 5-chain link is profinitely rigid in $\mathfrak{M}$.

\bibliographystyle{alpha}
\bibliography{bib.bib}
\end{document}